\theoremstyle{plain}
 \newtheorem{theorem}{Theorem}[section]
 \newtheorem{corollary}[theorem]{Corollary}
 \newtheorem{lemma}[theorem]{Lemma}
 \newtheorem{proposition}[theorem]{Proposition}
\newtheorem{refinedquestion}[theorem]{Refined Question}
\newcommand{\GL}{{\rm GL}}
\newcommand{\SL}{{\rm SL}}
\newcommand{\F}{\mathbb{F}}
\newcommand{\C}{\mathbb{C}}
\newcommand{\Z}{\mathbb{Z}}
\title{A question on splitting of metaplectic covers}
\author{Shiv Prakash Patel}
\date{\today}
\address{School of Mathematics\\ Tata Institute of Fundamental
Research\\ Homi Bhabha Road, Colaba\\Mumbai 400005, India}
\email{shiv@math.tifr.res.in}
\subjclass[2010]{Primary 11F27; Secondary 22G25, 11F70}
\keywords{splitting of metaplectic covers, quaternion division algebra }
\begin{document}
\maketitle
\begin{abstract}
Let $E/F$ be a quadratic extension of a non-Archimedian local field. Splitting of the 2-fold metaplectic cover of ${\rm Sp}_{2n}(F)$ when restricted to various subgroups of ${\rm Sp}_{2n}(F)$ plays an important role in application of the Weil representation of the metaplectic group. In this paper  we prove the splitting of the metaplectic cover of ${\rm GL}_{2}(E)$ over the subgroups ${\rm GL}_{2}(F)$ and $D_{F}^{\times}$, where $D_{F}$ is the quaternion division algebra with center $F$, as a first step in our study of the restriction of representations of metaplectic cover of ${\rm GL}_{2}(E)$ to ${\rm GL}_{2}(F)$ and $D_{F}^{\times}$. These results were suggested to the author by Professor Dipendra Prasad.
\end{abstract}
\section{Introduction}
 This paper will be concerned with certain 2-fold covers of ${\rm GL}_{2}(E)$, where $E$ is a non-Archimedian local field to be called the metaplectic covering of ${\rm GL}_{2}(E)$. We recall that there is a unique (up to isomorphism) non-trivial 2-fold cover of ${\rm SL}_{2}(E)$ called the metaplectic cover and denoted by $\widetilde{{\rm SL}_{2}(E)}$ in this paper, but there are many inequivalent 2-fold coverings of ${\rm GL}_{2}(E)$ which extend this 2-fold covering of ${\rm SL}_{2}(E)$.  We fix a covering of ${\rm GL}_{2}(E)$ as follows. Observe that ${\rm GL}_{2}(E)$ is the semi-direct product of ${\rm SL}_{2}(E)$ and $E^{\times}$, where $E^{\times}$ sits inside ${\rm GL}_{2}(E)$ as $e \mapsto \left( \begin{matrix} e & 0 \\ 0 & 1 \end{matrix} \right)$. This action of $E^{\times}$ on ${\rm SL}_{2}(E)$ lifts uniquely to an action of $E^{\times}$ on  $\widetilde{{\rm SL}_{2}(E)}$. Denote $\widetilde{{\rm GL}_{2}(E)} = \widetilde{{\rm SL}_{2}(E)}  \rtimes E^{\times}$ and call this the metaplectic cover of ${\rm GL}_{2}(E)$. Thus the metaplectic cover of ${\rm GL}_{2}(E)$ that we consider in this paper is that cover of ${\rm GL}_{2}(E)$ which extends the metaplectic cover of ${\rm SL}_{2}(E)$ and is further split on the subgroup $\left\{ \left( \begin{matrix} e & 0 \\ 0 & 1 \end{matrix} \right) : e \in E^{\times} \right\}$.\\
Given a central extension of a group $G$ by $\Z/2\Z$ say 
\[
0 \longrightarrow \Z/2\Z \longrightarrow G' \longrightarrow G \longrightarrow 1
\]
there is a natural central extension, say $G''$, of $G$ by $\C^{\times}$, given by
\[
G'' := G' \times_{\Z/2\Z} \C^{\times} :=\dfrac{G' \times \C^{\times}}{<(-1,-1)>}
\]
which sits in the following exact sequence
\begin{displaymath}
\xymatrix{ \{1\} \ar[r] & \Z/2\Z \ar[r] \ar@{^{(}->}[d] & G' \ar[r] \ar@{^{(}->}[d]  & G \ar[r] \ar@{=}[d] & \{1\} \\ 
 \{1\} \ar[r] & \C^{\times} \ar[r] & G'' \ar[r] & G \ar[r] & \{1\} }  
\end{displaymath}
This $\C^{\times}$-central extension of $G$ is said to be obtained from the 2-fold cover of $G$. It is well known that $\C^{\times}$-covers tend to be easier to analyse and which is what we shall do in this paper. \\
Let $F$ be a non-Archimedian local field of characteristic zero and $E$ is a quadratic extension of $F$. Let $D_{F}$ denote the unique quaternion division algebra with center $F$. Note that $D_{F}^{\times} \hookrightarrow \GL_{2}(E)$ given by fixing an isomorphism $D_{F} \otimes E \cong M_{2}(E)$. By Skolem-Noether theorem, such an embedding is uniquely determined upto conjugation by elements of $\GL_{2}(E)$. The main theorem of this paper is the following:
\begin{theorem} \label{main theorem}
Let $E$ be a quadratic extension of a non-Archimedian local field and $\widetilde{{\rm GL}_{2}(E)}$ be the two-fold metaplectic covering of ${\rm GL}_{2}(E)$.  Then:
\begin{enumerate}
\item The two-fold metaplectic covering splits over the subgroup ${\rm GL}_{2}(F)$.
\item The $\C^{\times}$-covering obtained from $\widetilde{{\rm GL}_{2}(E)}$ splits over the subgroup $D_{F}^{\times}$. 
\end{enumerate} 
\end{theorem}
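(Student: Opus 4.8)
The plan is to realize both covers by explicit Kubota-type $2$-cocycles and to reduce everything to one elementary fact about the Hilbert symbol $(\,\cdot\,,\,\cdot\,)_{E}$ of $E$.

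\textbf{Part (1).} Realize $\widetilde{\SL_{2}(E)}$ by a Kubota $2$-cocycle, for instance $\sigma_{E}(g,g')=\bigl(x(gg')x(g)^{-1},\,x(gg')x(g')^{-1}\bigr)_{E}$, where $x\left(\begin{matrix}a&b\\ c&d\end{matrix}\right)$ is $c$ if $c\neq0$ and $d$ otherwise; the arguments of the Hilbert symbols occurring in $\sigma_{E}(g,g')$ are ratios of matrix entries of $g$, $g'$, $gg'$. The key lemma is that $(a,b)_{E}=1$ whenever $a,b\in F^{\times}$: the quaternion $F$-algebra $(a,b)_{F}$ is either split or isomorphic to $D_{F}$, and in both cases it is split by the quadratic extension $E$ (equivalently, $\mathrm{res}_{E/F}$ is multiplication by $[E:F]=2$ on Brauer invariants and so annihilates $\mathrm{Br}(F)[2]$). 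Hence $\sigma_{E}$ is identically $1$ on $\SL_{2}(F)\times\SL_{2}(F)$, so $g\mapsto(g,1)$ is a homomorphic section of $\widetilde{\SL_{2}(E)}$ over $\SL_{2}(F)$. To go from $\SL_{2}(F)$ to $\GL_{2}(F)$, use $\widetilde{\GL_{2}(E)}=\widetilde{\SL_{2}(E)}\rtimes E^{\times}$ and $\GL_{2}(F)=\SL_{2}(F)\rtimes F^{\times}$: the automorphism of $\widetilde{\SL_{2}(E)}$ attached to conjugation by $\mathrm{diag}(f,1)$ is $(g,\zeta)\mapsto\bigl({}^{\mathrm{diag}(f,1)}g,\ \zeta\lambda_{f}(g)\bigr)$ for a unique function $\lambda_{f}$ (uniqueness because $\SL_{2}(E)$ is perfect); since $\mathrm{diag}(f,1)$-conjugation preserves $\SL_{2}(F)$ and $\sigma_{E}$ vanishes there, the defining relation of $\lambda_{f}$ forces $\lambda_{f}|_{\SL_{2}(F)}$ to be a homomorphism $\SL_{2}(F)\to\{\pm1\}$, hence trivial (again $\SL_{2}(F)$ is perfect). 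So the section over $\SL_{2}(F)$ is $F^{\times}$-equivariant, and $(g,f)\mapsto\bigl((g,1),f\bigr)$ splits $\widetilde{\GL_{2}(E)}$ over $\GL_{2}(F)$.

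\textbf{Part (2).} Fix a quadratic subfield $E\hookrightarrow D_{F}$ (it exists, $E$ being a maximal subfield of $D_{F}$) and write $D_{F}=E\oplus Ej$ with $j^{2}=\beta\in F^{\times}\smallsetminus N_{E/F}(E^{\times})$ and $jx=\bar{x}j$; choose the isomorphism $D_{F}\otimes_{F}E\cong M_{2}(E)$ so that $a+bj\mapsto\left(\begin{matrix}a&\beta b\\ \bar b&\bar a\end{matrix}\right)$, of determinant (= reduced norm) $N_{E/F}(a)-\beta N_{E/F}(b)\in F^{\times}$. Then $D_{F}^{\times}=E^{\times}\sqcup E^{\times}j$ is generated by the image of $E^{\times}$ (namely $e\mapsto\mathrm{diag}(e,\bar e)$) and by $j$, with $jej^{-1}=\bar e$ and $j^{2}=\beta$; I build a section of the $\C^{\times}$-cover $\widetilde{\GL_{2}(E)}_{\C}$ from these. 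First, the $\C^{\times}$-cover splits over $E^{\times}$: the $\mathrm{diag}(e,\bar e)$ commute, and a computation with $\sigma_{E}$ and the twisting cocycles $\lambda_{f}$ shows the metaplectic commutator of their lifts equals $(e_{1},\bar e_{2})_{E}(e_{2},\bar e_{1})_{E}$, which is $1$ because $(\cdot,\cdot)_{E}$ is symmetric and Galois-invariant, so $(e_{1},\bar e_{2})_{E}=(\bar e_{1},e_{2})_{E}=(e_{2},\bar e_{1})_{E}$. Thus the pullback of the cover to $E^{\times}$ is an \emph{abelian} topological extension of $E^{\times}$, and after pushing out to $\C^{\times}$ it is trivial: split it over the infinite cyclic factor of $E^{\times}$ (lift a uniformizer), over the finite part $\mu(E)$ (possible as $\C^{\times}$ is divisible), and over the remaining pro-$p$ part (a short argument with continuous cohomology of $\Z_{p}$). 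This is one of the two points where $\C^{\times}$ is essential: over $\{\pm1\}$ this extension of $E^{\times}$ need not split, since $\mathrm{Ext}^{1}(\mu(E),\Z/2)\neq0$. Fix such a splitting $s\colon E^{\times}\to\widetilde{\GL_{2}(E)}_{\C}$ (we may still multiply $s$ by any continuous character of $E^{\times}$) and a lift $\widetilde{j}$ of $j$. Then $\widetilde{j}\,s(e)\,\widetilde{j}^{-1}=s(\bar e)\chi(e)$ for a continuous character $\chi$ of $E^{\times}$, and $\chi|_{F^{\times}}$ is the metaplectic commutator of $j$ with the centre $F^{\times}$ of $D_{F}^{\times}$; since the metaplectic commutator of a central scalar $fI$ with any $g\in\GL_{2}(E)$ is $(f,\det g)_{E}$, we get $\chi(f)=(f,\det J)_{E}=(f,-\beta)_{E}=1$ by the lemma, as $f,-\beta\in F^{\times}$. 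Hence $\chi$ factors through $e\mapsto\bar e/e$ and may be absorbed by replacing $s$ with $s\eta$ for a suitable continuous character $\eta$, after which $\widetilde{j}\,s(e)\,\widetilde{j}^{-1}=s(\bar e)$ for all $e$. Finally $\widetilde{j}^{\,2}$ lies over $\beta I$, so $\widetilde{j}^{\,2}=s(\beta)c$ with $c\in\C^{\times}$; replacing $\widetilde{j}$ by $\widetilde{j}c'$ with $(c')^{2}=c^{-1}$ (a square root in $\C^{\times}$ — the second essential use of $\C^{\times}$) makes $\widetilde{j}^{\,2}=s(\beta)$ without disturbing the previous relation. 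Then $s$ on $E^{\times}$ together with $ej\mapsto s(e)\widetilde{j}$ on $E^{\times}j$ is a homomorphism $D_{F}^{\times}\to\widetilde{\GL_{2}(E)}_{\C}$, which is the desired splitting.

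The hard part is (2). Part (1) is essentially bookkeeping around the lemma, but in (2) one must actually carry out the two metaplectic-commutator computations — the one on $E^{\times}$ and the one of $j$ with the centre — starting from the explicit model $a+bj\mapsto\left(\begin{matrix}a&\beta b\\ \bar b&\bar a\end{matrix}\right)$, Kubota's $\sigma_{E}$, and the twisting cocycle of the semidirect product; I expect both to collapse, after the reductions above, to Hilbert-symbol identities over $E$ of exactly the same flavour as the lemma. The other delicate input is the continuous-cohomology statement that an abelian $\C^{\times}$-extension of $E^{\times}$ is trivial, together with the normalisations used to pass from it to a splitting over all of $D_{F}^{\times}$; and it is at the $\mathrm{Ext}^{1}$-step and in the extraction of the square root of $c$ that the passage from $\{\pm1\}$ to $\C^{\times}$ does real work.
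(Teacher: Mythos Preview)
Your Part~(1) is exactly the paper's argument: the Kubota cocycle for $\widetilde{\GL_{2}(E)}$ is built from quadratic Hilbert symbols $(\cdot,\cdot)_{E}$, and these vanish on $F^{\times}\times F^{\times}$ (the paper proves this via $(a,b)_{E}=(a,N_{E/F}b)_{F}=(a,b^{2})_{F}=1$, you via restriction on $\mathrm{Br}[2]$; same lemma). The passage from $\SL_{2}(F)$ to $\GL_{2}(F)$ via the semidirect product is a little more explicit in your write-up than in the paper, but the content is identical.

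Your Part~(2) is a genuinely different route. The paper never works with an explicit model of $D_{F}^{\times}\hookrightarrow\GL_{2}(E)$; instead it (i) proves splitting over $\SL_{1}(D_{F})$ by embedding $\SL_{1}(D_{F})\subset\SL_{2}(E)\subset\mathrm{Sp}_{4}(F)$ and invoking Kudla's theorem that the $\C^{\times}$-metaplectic cover of $\mathrm{Sp}_{4}(F)$ splits over the anisotropic $\mathrm{U}(2)$, and then (ii) runs a Hochschild--Serre argument for $1\to\SL_{1}(D_{F})\to D_{F}^{\times}\to F^{\times}\to 1$, splitting into cases by residue characteristic (Riehm's commutator theorem for $p=2$; an explicit computation of $H^{2}(D_{F}^{\times}/D_{F}^{\times}(1),\Z/2\Z)$ for odd $p$), and using throughout the key input that the cover is already known to split over every $L^{\times}$ with $[L:F]=2$. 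Your approach, by contrast, fixes the model $D_{F}=E\oplus Ej$, splits first over the maximal torus $E^{\times}=\{\mathrm{diag}(e,\bar e)\}$ by a metaplectic-commutator computation, and then extends across the single extra generator $j$ by a second commutator computation and a character-absorption step. What your approach buys is uniformity in $p$ and independence from Kudla's theorem and Riehm's result; what the paper's approach buys is that no explicit cocycle manipulations are needed and the argument is insensitive to the precise normalization of the cover. Both pinpoint the same two places where $\C^{\times}$ (as opposed to $\Z/2\Z$) is genuinely used.

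One caution: the two commutator identities you rely on---that $[\mathrm{diag}(e_{1},\bar e_{1})\,\widetilde{\ },\,\mathrm{diag}(e_{2},\bar e_{2})\,\widetilde{\ }]$ collapses to $(e_{1},\bar e_{2})_{E}(e_{2},\bar e_{1})_{E}$, and that $[\widetilde{fI},\tilde g]=(f,\det g)_{E}$---are normalization-dependent statements about \emph{this particular} cover $\widetilde{\SL_{2}(E)}\rtimes E^{\times}$, not about an arbitrary $2$-fold cover of $\GL_{2}(E)$ extending the metaplectic $\SL_{2}(E)$. You flag this yourself, but it is the one place where a reader would want to see the twisting cocycle $\lambda_{e}$ written down and the formulas checked, since a wrong power of the Hilbert symbol here would not be repaired by the rest of the argument.
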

Note that a quadratic extension $L$ of $F$ gives rise to two embeddings of $L$ in $M(2,E)$ as in the diagram below:
\begin{displaymath}
\xymatrix{ & D_{F} \ar@{^{(}->}[rd] & \\
L \ar@{^{(}->}[rd] \ar@{^{(}->}[ru] & & M(2,E). \\
& M(2,F) \ar@{^{(}->}[ru] & }
\end{displaymath}
 By Skolem-Noether theorem, any two embeddings of $L \otimes E$ in $M(2,E)$ and hence of $L$ are conjugate in $M(2,E)$ by ${\rm GL}_{2}(E)$. \\
 
The  following refined question was formulated by Dipendra Prasad.
\begin{refinedquestion}
Does there exist a natural identification of the set of splittings of the $\C^{\times}$-cover of ${\rm GL}_{2}(E)$ restricted to ${\rm GL}_{2}(F)$ and set of splittings restricted to $D_{F}^{\times}$ (in either of the two cases the set of splittings is a principal homogeneous space over the character group of $F^{\times}$) such that for any quadratic extension $L$ of $F$, the two embeddings of $L^{\times}$ in the $\C^{\times}$-covers $\widetilde{{\rm GL}_{2}(E)}$ (which we take to be $\C^{\times}$-central extension of ${\rm GL}_{2}(E)$) 
\begin{displaymath}
\xymatrix{ & D_{F}^{\times} \ar@{^{(}->}[rd]^{j} & \\
L^{\times} \ar@{^{(}->}[rd] \ar@{^{(}->}[ru] & & \widetilde{{\rm GL}_{2}(E)}. \\
& {\rm GL}_{2}(F) \ar@{^{(}->}[ru]^{i} & }
\end{displaymath}
are conjugate in the $\C^{\times}$ cover $\widetilde{\GL_{2}(E)}$ ?
\end{refinedquestion}
We are not able to handle the refined question, and will only contend with the proof of the splitting of the  metaplectic cover of ${\rm GL}_{2}(E)$ restricted to $D_{F}^{\times}$. However the above refined question plays an important role in harmonic analysis relating the pair $(\widetilde{{\rm GL}_{2}(E)}, {\rm GL}_{2}(F))$ with the pair $(\widetilde{{\rm GL}_{2}(E)}, D_{F}^{\times})$. \\ \\
We briefly say a few words about the proofs. The proof for ${\rm GL}_{2}(F)$ is straight forward from the explicit knowledge of the cocycle defining the metaplectic cover. For any quadratic extension $L$ of $F$, we know that the embedding $L^{\times} \hookrightarrow D_{F}^{\times}$ is conjugate inside ${\rm GL}_{2}(E)$ to the embedding of $L^{\times}$ inside ${\rm GL}_{2}(E)$ realized as $L^{\times} \hookrightarrow {\rm GL}_{2}(F) \hookrightarrow {\rm GL}_{2}(E)$. Since the metaplectic cover of ${\rm GL}_{2}(E)$ is split over ${\rm GL}_{2}(F)$, it is split in particular over $L^{\times}$ for any quadratic extension $L$ of $F$. Thus we know that the restriction of the metaplectic cover of ${\rm GL}_{2}(E)$ to $D_{F}^{\times}$ has the property that it splits over $L^{\times}$ of any quadratic extension $L$ of $F$. This is the key property to be used in the proofs below. 
The coverings considered in this paper are topological coverings which are locally split, i.e. all the coverings considered here split when restricted to a small neighbourhood  of identity. Although it appears that we deal with abstract coverings we do not emphasize every time on the fact that these are topological coverings.

\section{Splitting over ${\rm GL}_{2}(F)$} \label{section:GL2}
We prove the following proposition:
\begin{proposition} \label{prop:A}
Let $E$ be a quadratic extension of a non-Archimedian local field $F$. Then the metaplectic 2-fold cover $\widetilde{{\rm GL}_{2}(E)}$ of ${\rm GL}_{2}(E)$, as described in the introduction, splits over the subgroup ${\rm GL}_{2}(F)$.
\end{proposition}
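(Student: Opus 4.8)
The plan is to work directly with an explicit cocycle representing the metaplectic cover of $\SL_2(E)$, extend it to $\GL_2(E)$ using the semidirect product description from the introduction, and then exhibit a $1$-cochain on $\GL_2(F)$ trivializing the restricted cocycle. I would use Rao's normalized cocycle $c_{\psi}\colon \SL_2(E)\times\SL_2(E)\to\{\pm 1\}$ (for a fixed additive character $\psi$ of $E$), which is built from the Leray/Maslov index and, on the relevant cells, reduces to Hilbert symbols $(x,y)_E$ of entries coming from the Bruhat decomposition. The key structural input, already noted in the introduction, is that $\GL_2(E)=\SL_2(E)\rtimes E^{\times}$ with $E^{\times}$ embedded as $\mathrm{diag}(e,1)$, and the cover is split on that torus; so the cocycle on $\GL_2(E)$ is determined by $c_\psi$ together with the (lifted, hence trivial-looking) conjugation action of $E^{\times}$.

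The main point is then to understand how $c_\psi$ behaves when both arguments lie in $\GL_2(F)$. First I would reduce, via the Bruhat decomposition of $\GL_2(F)$, to checking the cocycle on products of elements of the standard Borel $B(F)$ and the Weyl element $w=\left(\begin{smallmatrix}0&1\\-1&0\end{smallmatrix}\right)$; on the torus and unipotent radical the cover is already split, so only the terms involving $w$ and the Hilbert symbols $(\cdot,\cdot)_E$ of $F$-entries matter. The crucial arithmetic fact is that for $x,y\in F^{\times}$ the Hilbert symbol of $E$ restricted to $F$ is trivial: $(x,y)_E=1$ because $(x,y)_F^{[E:F]}=(x,y)_F^{2}=1$, i.e. an element of $F^{\times}$ that might fail to be a norm from $F(\sqrt{y})$ becomes a norm after the quadratic base change to $E$. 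This is exactly the phenomenon that forces the restricted cocycle to be cohomologically trivial, and it is the step I expect to carry the real content.

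Concretely, I would write down Rao's formula for $c_\psi(g_1,g_2)$ in terms of the "$x$-coordinates" of $g_1,g_2,g_1g_2$ in the big cell and the Weyl-element contributions, observe that every Hilbert-symbol factor appearing has both entries in $F^{\times}$, invoke $(x,y)_E=1$ for $x,y\in F^{\times}$, and conclude that $c_\psi|_{\GL_2(F)\times\GL_2(F)}$ is a coboundary — in fact I expect it to become identically trivial for a suitable choice of $\psi$ (one whose restriction to $F$ is a fixed additive character of $F$), or at worst to differ from the trivial cocycle by an explicit sign function expressible through Hilbert symbols over $F$, which one then trivializes by an explicit $1$-cochain. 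Either way this produces the desired splitting homomorphism $\GL_2(F)\to\widetilde{\GL_2(E)}$.

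The anticipated obstacle is bookkeeping rather than conceptual: Rao's cocycle is only "multiplicative up to sign" in a way that depends on a choice of section and on the Leray index, so one must be careful that the decomposition $\GL_2(F)=B(F)\sqcup B(F)wB(F)$ is compatible with the section used to define $c_\psi$, and that the conjugation action of $\mathrm{diag}(e,1)$ with $e\in F^{\times}$ does not introduce extra signs (it should not, since it preserves $\SL_2(F)$ and the splitting is canonical). A clean way to sidestep some of this is to first prove the statement on $\SL_2(F)$ by the Hilbert-symbol argument above, then extend over the diagonal torus $\{\mathrm{diag}(a,d):a,d\in F^{\times}\}$ (using again that the relevant cohomology/cocycle on the $F$-torus is killed by base change to $E$), and finally note that $\SL_2(F)$ together with this torus generates $\GL_2(F)$, checking compatibility on the overlap $\SL_2(F)\cap T(F)$. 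I would present the argument in that order, with the Hilbert-symbol triviality $(x,y)_E=1$ for $x,y\in F^{\times}$ flagged as the heart of the proof.
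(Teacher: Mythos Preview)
Your approach is essentially the same as the paper's: both rest on the fact that the explicit (Kubota/Rao) cocycle for $\widetilde{\GL_2(E)}$ is built entirely from quadratic Hilbert symbols $(\cdot,\cdot)_E$, together with the vanishing $(a,b)_E=1$ for $a,b\in F^{\times}$. The paper is more direct than your outline: using the norm formula $(a,b)_E=(a,N_{E/F}b)_F=(a,b^2)_F=1$ it observes that the cocycle is \emph{identically} $1$ on $\GL_2(F)\times\GL_2(F)$, so no trivializing $1$-cochain, Bruhat-cell bookkeeping, or choice of $\psi$ is needed.
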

\begin{proof}
 To prove that the covering $\widetilde{{\rm GL}_{2}(E)}$ of ${\rm GL}_{2}(E)$ splits over ${\rm GL}_{2}(F)$, we observe that the two-cocycle $\beta$ which defines the two-fold metaplectic cover satisfies $\beta(\sigma, \tau) = 1$ for all $\sigma, \tau \in {\rm GL}_{2}(F)$, i.e., the cocycle is identically 1 when restricted to ${\rm GL}_{2}(F)$. One knows that the defining expression of the cocycle $\beta$ involves only quadratic Hilbert symbols of the field $E$. The proof will follow if we prove that restriction of the quadratic Hilbert symbol of $E$ restricted to $F$ is identically 1, which is the content of the next lemma.
\end{proof}
\begin{lemma} \label{lemma:B}
If we denote the quadratic Hilbert symbol of the field $E$ by $( \cdot, \cdot)_{E}$. Then
\[
 (a,b)_{E} = 1 \mbox{ for all } a,b \in F^{\times}.
\]
\end{lemma}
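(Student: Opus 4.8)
The plan is to reduce the statement to a local computation using the Hilbert symbol of $F$ itself, exploiting the relationship between the norm form of $E/F$ and the quadratic form whose Hasse--Witt invariant the Hilbert symbol detects. Concretely, for $a, b \in F^{\times}$, the symbol $(a,b)_E$ equals $1$ if and only if the quadratic form $ax^2 + by^2 = z^2$ has a solution in $E$, or equivalently if the conic $C_{a,b}$ has an $E$-rational point. Since $E/F$ is a quadratic extension, the key observation is that whether $C_{a,b}$ acquires a point over $E$ is governed by a corestriction/restriction formalism for Brauer groups, and the class of the quaternion algebra $(a,b)_F$ in $\mathrm{Br}(F)[2]$ maps to $(a,b)_E$ under restriction $\mathrm{Br}(F) \to \mathrm{Br}(E)$.

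First I would recall that for a nonarchimedean local field $K$ of characteristic zero, $\mathrm{Br}(K) \cong \mathbb{Q}/\mathbb{Z}$, so $\mathrm{Br}(K)[2] \cong \tfrac{1}{2}\mathbb{Z}/\mathbb{Z}$ has order $2$, and the nontrivial class is exactly that of the unique quaternion division algebra. Under the restriction map $\mathrm{res}_{E/F}\colon \mathrm{Br}(F) \to \mathrm{Br}(E)$, which on invariants is multiplication by the local degree $[E:F] = 2$, the $2$-torsion class $(a,b)_F$ goes to $2 \cdot \mathrm{inv}_F(a,b)_F = 0$ in $\mathbb{Q}/\mathbb{Z}$. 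Hence $\mathrm{res}_{E/F}(a,b)_F$ is split over $E$. Since $\mathrm{res}_{E/F}$ sends the symbol algebra $(a,b)_F$ to the symbol algebra $(a,b)_E$ (symbols are compatible with scalar extension, both being presented by the same relations $i^2 = a$, $j^2 = b$, $ij = -ji$), we conclude $(a,b)_E = 1$ in $\mathrm{Br}(E)$, which is exactly the assertion $(a,b)_E = 1$ as a Hilbert symbol value.

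Alternatively, if one prefers to avoid invoking Brauer group machinery, I would argue directly: we must show the conic $ax^2 + by^2 = z^2$ always has a nontrivial $E$-point when $a, b \in F^{\times}$. If it already has an $F$-point we are done, so assume $(a,b)_F = -1$, i.e. $(a,b)_F$ is the quaternion division algebra $D_F$. Then $D_F \otimes_F E \cong M_2(E)$ precisely because a quaternion division algebra over a nonarchimedean local field is split by \emph{every} quadratic extension of its center (its splitting fields are exactly the quadratic extensions of $F$, since all such embed in $D_F$). This is the content of the embedding $L \hookrightarrow D_F$ displayed in the introduction for $L = E$. Splitting of $D_F \otimes E$ means the associated conic has an $E$-point, so $(a,b)_E = 1$. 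The main —- and essentially only —- obstacle here is making precise the statement that a quaternion division algebra over a nonarchimedean local field is split by every quadratic extension of its center; but this is classical local class field theory (it follows from $\mathrm{Br}(F) \cong \mathbb{Q}/\mathbb{Z}$ and the behavior of invariants under base change), and we simply cite it.
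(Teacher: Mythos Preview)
Your argument is correct. Both the Brauer-group route and the ``every quadratic extension splits $D_F$'' route are valid and lead to the conclusion.

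However, the paper's own proof is considerably shorter and more elementary: it uses the projection formula $(a,b)_E = (a, N_{E/F}(b))_F$ for $a \in F^\times$ and $b \in E^\times$, and then simply observes that for $b \in F^\times$ one has $N_{E/F}(b) = b^2$, so $(a,b)_E = (a,b^2)_F = 1$. Your approach passes through the identification $\mathrm{Br}(F) \cong \mathbb{Q}/\mathbb{Z}$ and the fact that restriction acts as multiplication by the local degree, which is heavier machinery; conceptually this is the same phenomenon (indeed the projection formula is the $K_2$ shadow of the corestriction-restriction identity you are using), but the paper's one-line computation avoids invoking Brauer groups or the structure of $D_F$ altogether. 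What your approach buys is a clearer explanation of \emph{why} the symbol vanishes---it is the $2$-torsion-kills-$2$-torsion mechanism---whereas the paper's proof is a direct computation that is quicker but less illuminating about the underlying reason.
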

\begin{proof}
Let $(\cdot, \cdot)_{F}$ denotes the quadratic Hilbert symbol of the field $(\cdot, \cdot)_{F}$. Then it is well known that for $a \in F^{\times}$ and $b \in E^{\times}$, we have
\[
(a,b)_{E} = (a, Nb)_{F}.
\]
Hence for $a, b \in F^{\times}$ we have
\[
  (a,b)_{E} = (a, Nb)_{F} = (a, b^{2})_{F} = 1. \qedhere
\]
\end{proof}
\section{Splitting over ${\rm SL}_{1}(D_{F})$} \label{section:SL1D}
Recall that $D_{F}$ denotes the unique quaternion division algebra over the field $F$ and ${\rm SL}_{1}(D_{F})$ is the subgroup of norm 1 elements. Fix an embedding $E \hookrightarrow D_{F}$ through which $D_{F}$ can be realized as a two dimensional vector space over $E$ with $E$ acting on $D_{F}$ on the left and $D_{F}$ acting on itself on the right. This gives rise to an embedding $D_{F}^{\times} \hookrightarrow {\rm GL}_{2}(E)$. Since  ${\rm SL}_{1}(D_{F})$ is compact, we can assume that ${\rm SL}_{1}(D_{F}) \subset {\rm GL}_{2}(\mathcal{O}_{E})$. It is well known that if the residue characteristic of $F$ is odd, then the two-fold metaplectic cover $\widetilde{{\rm GL}_2(E)}$ of ${\rm GL}_2(E)$ splits over ${\rm GL}_{2}(\mathcal{O}_{E})$ and hence over ${\rm SL}_{1}(D_{F})$. Such a simple minded proof does not work for $p=2$, however, we prove in this section that the $\C^{\times}$ metaplectic cover of ${\rm SL}_{2}(E)$ splits when restricted to ${\rm SL}_{1}(D_{F})$.

\begin{proposition} \label{prop:C}
 Restriction of the non-trivial 2-fold cover of ${\rm SL}_{4}(F)$ to ${\rm SL}_{2}(E)$ remains non-trivial, hence gives the unique non-trivial 2-fold cover of ${\rm SL}_{2}(E)$.
\end{proposition}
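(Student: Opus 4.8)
The plan is to realise the unique nontrivial $2$-fold cover of $\SL_{2}(E)$ as the restriction of $\widetilde{\SL_{4}(F)}$, routing through the symplectic group $\mathrm{Sp}_{4}(F)$ and using that the Weil (oscillator) representation is compatible with restriction of scalars. First the set-up: fixing an $F$-basis of $E$ identifies $E^{2}$ with $F^{4}$, and since $\det_{F}(g)=N_{E/F}(\det_{E}g)$ for $g\in\GL_{2}(E)$, the inclusion $\GL_{2}(E)\hookrightarrow\GL_{4}(F)$ carries $\SL_{2}(E)$ into $\SL_{4}(F)$. Because $\SL_{2}(E)$ has only one nontrivial $2$-fold cover, the restriction of $\widetilde{\SL_{4}(F)}$ to $\SL_{2}(E)$ is either trivial or that metaplectic cover; the whole content of the proposition is to rule out the trivial possibility.

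I would then place $\SL_{2}(E)$ inside a symplectic group. Equip $E^{2}$ with a nondegenerate alternating $E$-bilinear form $\langle\,,\,\rangle_{E}$ and set $\langle\,,\,\rangle_{F}:=\mathrm{Tr}_{E/F}\circ\langle\,,\,\rangle_{E}$, a nondegenerate alternating $F$-bilinear form on $F^{4}=E^{2}$. Then $\SL_{2}(E)=\mathrm{Sp}(E^{2},\langle\,,\,\rangle_{E})$ preserves $\langle\,,\,\rangle_{F}$, so $\SL_{2}(E)$ lies in $\mathrm{Sp}_{4}(F):=\mathrm{Sp}(E^{2},\langle\,,\,\rangle_{F})\subseteq\SL_{4}(F)$. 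Next I claim that $\widetilde{\SL_{4}(F)}$ restricts to the metaplectic cover $\widetilde{\mathrm{Sp}_{4}(F)}$ (and not to the trivial cover): choosing a $\langle\,,\,\rangle_{F}$-hyperbolic plane $P\subseteq F^{4}$, the subgroup of $\mathrm{Sp}_{4}(F)$ acting as $\mathrm{Sp}(P)\cong\SL_{2}(F)$ on $P$ and trivially on $P^{\perp}$ is, in a basis adapted to $F^{4}=P\oplus P^{\perp}$, a coordinate $2\times 2$ block of $\SL_{4}(F)$; the restriction of $\widetilde{\SL_{4}(F)}$ along such a block (a Dynkin-index-one embedding) is the nontrivial metaplectic cover of $\SL_{2}(F)$, so $\widetilde{\SL_{4}(F)}$ restricted to $\mathrm{Sp}_{4}(F)$ is nontrivial and hence equals $\widetilde{\mathrm{Sp}_{4}(F)}$. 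Consequently $\widetilde{\SL_{4}(F)}$ restricted to $\SL_{2}(E)$ agrees with $\widetilde{\mathrm{Sp}_{4}(F)}$ restricted to $\SL_{2}(E)$.

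It remains to identify this with the metaplectic cover of $\SL_{2}(E)=\mathrm{Sp}_{2}(E)$. Fix a nontrivial additive character $\psi$ of $F$ and put $\psi_{E}:=\psi\circ\mathrm{Tr}_{E/F}$, and choose a complete polarisation $E^{2}=X\oplus Y$ over $E$; since $\mathrm{Tr}_{E/F}$ kills $0$ and $\dim_{F}X=2=\tfrac{1}{2}\dim_{F}E^{2}$, this is at the same time a complete polarisation of $(E^{2},\langle\,,\,\rangle_{F})$ over $F$. On the common Schr\"odinger model $\mathcal{S}(X)$, the Heisenberg operators $\rho(v)$ for $v\in E^{2}$ are given by the same formulas whether built from $(\langle\,,\,\rangle_{F},\psi)$ or from $(\langle\,,\,\rangle_{E},\psi_{E})$, because these formulas involve $\psi$ only through its values on the form and $\psi(\langle v,v'\rangle_{F})=\psi_{E}(\langle v,v'\rangle_{E})$; hence for each $g\in\mathrm{Sp}_{2}(E)$ the operators intertwining $\rho$ with $\rho\circ g$ are the same over $F$ and over $E$. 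Realising each metaplectic group as the set of pairs (group element, intertwining operator), this gives that $\widetilde{\mathrm{Sp}_{4}(F)}$ restricted to $\SL_{2}(E)$ is the metaplectic cover $\widetilde{\SL_{2}(E)}$, which is nontrivial (the Weil representation $\omega_{\psi_{E}}$ does not descend to $\SL_{2}(E)$). Combined with the previous paragraph this proves the proposition.

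The step I expect to require the most care is the middle one, namely that the generator of $H^{2}(\SL_{4}(F),\mu_{2})$ is not killed on restriction to $\mathrm{Sp}_{4}(F)$: one must check carefully that the chosen $\SL_{2}(F)\subseteq\mathrm{Sp}_{4}(F)$ really does sit in $\SL_{4}(F)$ as a coordinate $2\times 2$ block (equivalently, with Dynkin index $1$), so that the cover does not become trivial on it. A more arithmetic alternative avoids symplectic groups altogether: the universal topological central extension of $\SL_{m}(k)$ over a local field $k$ is governed by $K_{2}(k)$, the embedding $\SL_{2}(E)=R_{E/F}\SL_{2}\hookrightarrow\SL_{4}(F)$ induces the transfer $N_{E/F}\colon K_{2}(E)\to K_{2}(F)$ on these groups, and one checks that the quadratic norm-residue symbol of $F$ precomposed with $N_{E/F}$ is nonzero on $K_{2}(E)$ --- by the projection formula it sends $\{a,b\}$ with $b\in F^{\times}$ to $(N_{E/F}a,b)_{F}$, and $N_{E/F}(E^{\times})$, being of index $2$ in $F^{\times}$, cannot lie in $(F^{\times})^{2}$. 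There the delicate point is instead the identification of the induced map on $K_{2}$ with the transfer.
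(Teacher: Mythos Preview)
Your argument is correct, and so is the alternative you sketch at the end. In fact that alternative \emph{is} the paper's proof: the paper dispatches the proposition in one line by invoking that the transfer map $K_{2}(E)/2K_{2}(E)\to K_{2}(F)/2K_{2}(F)$ is an isomorphism (citing Milnor), which is exactly your final paragraph with the projection-formula check that the composite $K_{2}(E)\xrightarrow{N_{E/F}}K_{2}(F)\to\mu_{2}$ is nonzero.

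Your main route, by contrast, goes through $\mathrm{Sp}_{4}(F)$ and the compatibility of the Schr\"odinger model under restriction of scalars. This is a genuine methodological difference. The paper does invoke the chain $\SL_{2}(E)\subset\mathrm{Sp}_{4}(F)\subset\SL_{4}(F)$, but only in the \emph{corollary} that follows, and in the opposite logical direction: having already proved (via $K_{2}$) that the cover of $\SL_{4}(F)$ restricts nontrivially to $\SL_{2}(E)$, it concludes a posteriori that the restrictions through $\mathrm{Sp}_{4}(F)$ are nontrivial, and then applies Kudla's splitting over $\mathrm{U}(2)$. You instead establish the two restrictions $\widetilde{\SL_{4}(F)}|_{\mathrm{Sp}_{4}(F)}$ and $\widetilde{\mathrm{Sp}_{4}(F)}|_{\SL_{2}(E)}$ directly---the first by a root-$\SL_{2}$ calculation, the second by the Heisenberg/Weil argument---and deduce the proposition. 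What you gain is self-containment (no $K$-theoretic black box beyond the Steinberg cocycle on a block $\SL_{2}$) and the symplectic intermediate step for free; what the paper gains is brevity. The step you flag as delicate, that the $\SL_{2}(F)$ stabilising a hyperbolic plane really sits as a coordinate block in $\SL_{4}(F)$, is fine: in a basis $e_{1},e_{2},e_{3},e_{4}$ with $P=\langle e_{1},e_{2}\rangle$ it is literally $\mathrm{diag}(A,I_{2})$.
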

\begin{proof}
The proposition amounts to the assertion that there is a commutative diagram between the unique 2-fold covers of ${\rm SL}_{2}(E)$ and ${\rm SL}_{4}(F)$:
\begin{displaymath}
\xymatrix{ 0 \ar[r] & \Z/2\Z \ar[r] \ar@{=}[d] & \widetilde{{\rm SL}_{2}(E)} \ar[r] \ar@{^{(}->}[d] & {\rm SL}_{2}(E) \ar[r] \ar@{^{(}->}[d] & 1 \\
                 0 \ar[r] & \Z/2\Z \ar[r]  & \widetilde{{\rm SL}_{4}(F)} \ar[r] & {\rm SL}_{4}(F) \ar[r]  & 1 }
\end{displaymath}
This follows from the generality that the transfer map from $K_{2}(E)/2K_{2}(E)$ to $K_{2}(F)/2K_{2}(F)$ is an isomorphism \cite{Mil71}; we omit the details.
\end{proof}
\begin{corollary} \label{corollary:D}
 The $\C^{\times}$-cover of  ${\rm SL}_{2}(E)$ obtained from $\widetilde{{\rm SL}_{2}(E)}$ splits over ${\rm SL}_{1}(D_{F})$. \qedhere
\end{corollary}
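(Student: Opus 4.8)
The plan is to deduce Corollary \ref{corollary:D} from Proposition \ref{prop:C} together with the fact, recorded in the introduction, that the metaplectic cover of ${\rm GL}_2(E)$ — and hence the non-trivial $2$-fold cover of ${\rm SL}_2(E)$ — splits over $L^\times$ for every quadratic extension $L/F$, and in particular over the norm-one torus $L^1 = {\rm SL}_1(L)$. The group ${\rm SL}_1(D_F)$ is compact, and the key structural input is that it is generated by (indeed, covered by) its subgroups of the form $L^1$ as $L$ ranges over the quadratic subfields of $D_F$: every element of ${\rm SL}_1(D_F)$ lies in $L^1$ for some maximal subfield $L \subset D_F$. So the restriction of the cover to ${\rm SL}_1(D_F)$ splits on each of these subgroups; the task is to promote this to a global splitting on ${\rm SL}_1(D_F)$ itself.

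First I would reduce to a cohomological statement. A $\C^\times$-central extension of ${\rm SL}_1(D_F)$ (locally trivial, as emphasized in the introduction) splits if and only if its class in $H^2_{\mathrm{cont}}({\rm SL}_1(D_F), \C^\times)$ vanishes. Since ${\rm SL}_1(D_F)$ is a profinite (compact, totally disconnected) group and $\C^\times$ is divisible, one has $H^2_{\mathrm{cont}}({\rm SL}_1(D_F), \C^\times) \cong H^2_{\mathrm{cont}}({\rm SL}_1(D_F), \mathbb{Q}/\mathbb{Z})$, and a locally trivial class is the inflation of a class at a finite level, i.e. factors through $H^2$ of a finite quotient. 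The $2$-fold cover coming from $\widetilde{{\rm SL}_2(E)}$ gives a $2$-torsion class, so it is classified by $H^2_{\mathrm{cont}}({\rm SL}_1(D_F), \Z/2\Z)$ together with the identification $\C^\times[2] = \Z/2\Z$. Thus it suffices to show that this $\Z/2\Z$-class is trivial.

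Next I would invoke the restriction-corestriction / detection principle for the $2$-part of cohomology of profinite groups: a class in $H^*(G,\Z/2\Z)$ for $G$ profinite is trivial if and only if its restriction to a Sylow pro-$2$ subgroup $P$ is trivial; and for $G = {\rm SL}_1(D_F)$ a pro-$2$ Sylow subgroup sits inside $L^1$ for a suitable (quadratic, wildly or tamely ramified) subfield $L/F$ — concretely, $L^1$ contains the pro-$2$ part of ${\rm SL}_1(\mathcal{O}_D)$ after possibly passing through a chain, because the norm-one groups $L^1$ exhaust ${\rm SL}_1(D_F)$ and $L^1$ is itself virtually pro-$2$. Hence the restriction of our class to $P \subset L^1$ is the restriction of the class on $L^1$, which we already know is trivial since the cover splits over $L^1$. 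Therefore the class on ${\rm SL}_1(D_F)$ restricts trivially to a Sylow pro-$2$ subgroup, so it is itself trivial, and the cover splits.

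The main obstacle is the second step's passage from "splits on each $L^1$" to "restricts trivially to a Sylow pro-$2$ subgroup": one must verify that a Sylow pro-$2$ subgroup of ${\rm SL}_1(D_F)$ can genuinely be conjugated inside a single $L^1$ (equivalently, that a pro-$2$ subgroup of $D_F^\times$ generates a commutative — hence subfield — subalgebra, which holds because a pro-$p$ subgroup of $D_F^\times$ for $p$ equal to the residue characteristic or not is abelian in the relevant cases, or can be arranged to be so by a finiteness/filtration argument on the standard congruence filtration of ${\rm SL}_1(\mathcal{O}_D)$). Once that is in place, the splitting over $L^1$ supplied by Proposition \ref{prop:A} applied to $L^\times \subset {\rm GL}_2(F)$, pulled back along Proposition \ref{prop:C}, finishes the argument. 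I would also remark that uniqueness of the splitting up to a character of ${\rm SL}_1(D_F)$ is automatic, and that ${\rm SL}_1(D_F)$ being perfect-mod-pro-$2$ pins the splitting down further, though this refinement is not needed for the statement.
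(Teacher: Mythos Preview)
Your detection-by-Sylow strategy has a genuine gap exactly in the case that matters. When the residue characteristic of $F$ is odd the paper already notes that $\widetilde{{\rm GL}_2(E)}$ splits over ${\rm GL}_2(\mathcal{O}_E)\supset {\rm SL}_1(D_F)$, so nothing needs to be proved. The content of the corollary is the case $p=2$, and there your key claim fails: a Sylow pro-$2$ subgroup of ${\rm SL}_1(D_F)$ is \emph{not} contained in any $L^1$. Indeed, for $p=2$ the quotient ${\rm SL}_1(D_F)/G_1$ is cyclic of order $q+1$, which is odd, so the first congruence subgroup $G_1={\rm SL}_1(D_F)\cap(1+\mathcal{P}_{D_F})$ is itself the (unique) Sylow pro-$2$ subgroup. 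But $1+\mathcal{P}_{D_F}$ is non-abelian (e.g.\ $(1+a\varpi)(1+b\varpi)\neq(1+b\varpi)(1+a\varpi)$ in general, since conjugation by $\varpi$ acts as Frobenius on residues), hence $G_1$ is non-abelian and cannot sit inside the commutative group $L^1$ for any quadratic $L/F$. So the parenthetical assertion that ``a pro-$p$ subgroup of $D_F^\times$ \ldots\ is abelian'' is simply false when $p$ equals the residue characteristic, and no ``finiteness/filtration argument'' repairs it.

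Consequently, knowing that the cover splits on every $L^1$ does not, via restriction--corestriction, force triviality on ${\rm SL}_1(D_F)$ when $p=2$; you would still need an independent argument that the class restricts trivially to $G_1$. The paper sidesteps this entirely: it embeds ${\rm SL}_1(D_F)\hookrightarrow{\rm SL}_2(E)\hookrightarrow{\rm Sp}_4(F)$, uses Proposition~\ref{prop:C} to identify the metaplectic cover of ${\rm SL}_2(E)$ with the restriction of that of ${\rm Sp}_4(F)$, and then invokes Kudla's theorem that the $\C^\times$-cover of ${\rm Sp}_{2n}(F)$ splits over any unitary group ${\rm U}(n)$; taking the anisotropic hermitian form in two variables gives ${\rm SU}(2)\cong{\rm SL}_1(D_F)$, and the splitting follows. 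Your approach, by contrast, would need substantially more input about $H^2(G_1,\Z/2\Z)$ to be completed.
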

\begin{proof}
From the proposition \ref{prop:C}, restriction of the 2-fold cover from ${\rm SL}_{4}(F)$ to ${\rm SL}_{2}(E)$ remains non-trivial. Since we have inclusion of groups
\begin{displaymath}
\xymatrix{ {\rm SL}_{2}(E) \ar@{^{(}->}[r] & {\rm Sp}_{4}(F) \ar@{^{(}->}[r] & {\rm SL}_{4}(F), }
\end{displaymath}
and all these groups have a unique non-trivial 2-fold cover, we deduce that the unique non-trivial 2-fold cover of ${\rm SL}_{4}(F)$ restricts to give the unique non-trivial 2-fold cover of ${\rm Sp}_{4}(F)$ which in turn restricts to the unique non-trivial 2-fold cover of ${\rm SL}_{2}(E)$. Now we use the inclusion of the groups
\begin{displaymath}
\xymatrix{ {\rm SL}_{1}(D_{F}) \ar@{^{(}->}[r] & {\rm SL}_{2}(E) \ar@{^{(}->}[r] & {\rm Sp}_{4}(F) }
\end{displaymath}
and use a result of Kudla \cite[~Theorem 3.1]{Kud94} according to which the restriction of the $\C^{\times}$-covering of ${\rm Sp}_{4}(F)$ to ${\rm U}(2)$ splits. (The result of Kudla is valid for any unitary group ${\rm U}(n)$ defined by a skew hermitian form in $n$ variables over $E$ which comes with a natural embedding in ${\rm Sp}_{2n}(F)$). If we take the hermitian form in  2 variables which is anisotropic, then for the corresponding unitary group ${\rm U}(2), {\rm SU}(2) \cong {\rm SL}_{1}(D_{F})$.  As a result, restriction of the $\C^{\times}$-covering from ${\rm SL}_{2}(E)$ to ${\rm SL}_{1}(D)=SU(2) \subset U(2)$ splits.	\qedhere
\end{proof}

\section{Splitting over $D_{F}^{\times}$} \label{section:D*}
In this section we prove the splitting of $\C^{\times}$-cover of ${\rm GL}_{2}(E)$ obtained from $\widetilde{{\rm GL}_{2}(E)}$ when restricted to $D_{F}^{\times}$. 
\subsection{Even residue characteristic case} 
Note the following short exact sequence
\[
 1 \longrightarrow {\rm SL}_{1}(D_{F}) \longrightarrow D_{F}^{\times} \longrightarrow F^{\times} \longrightarrow 1. \label{exact:A} \tag{A}
\]
Let $\C^{\times}$ be the trivial $D_{F}^{\times}$-module. Then $H^{2}(D_{F}^{\times}, \C^{\times})$ classifies central extensions of $D_{F}^{\times}$ by the group $\C^{\times}$.  The Hochschild-Serre spectral sequence arising from (\ref{exact:A}) gives a filtration on $H^{2}(D_{F}^{\times}, \C^{\times}):$
\[
 H^{2}(D_{F}^{\times}, \C^{\times}) = F^{0} \supseteq F^{1} \supseteq F^{2} \supseteq 0
\]
with $F^{0}/F^{1} = E_{\infty}^{0,2}$, $F^{1}/F^{2}= E_{\infty}^{1,1}$ and $F^{2} = E_{\infty}^{2,0}$, where
\begin{center} 
 $E_{2}^{0,2} = H^{0}(F^{\times}, H^{2}({\rm SL}_{1}(D_{F}), \C^{\times})), $ \\
 $E_{2}^{1,1} = H^{1}(F^{\times}, H^{1}({\rm SL}_{1}(D_{F}), \C^{\times})), $ \\
 $E_{2}^{2,0} = H^{2}(F^{\times}, H^{0}({\rm SL}_{1}(D_{F}), \C^{\times})) $.
\end{center}
Consider the embedding $D_{F}^{\times} \hookrightarrow {\rm GL}_{2}(E)$ and denote the restriction of the central extension of ${\rm GL}_{2}(E)$ to $D_{F}^{\times}$ as well as the corresponding element of $H^{2}(D_{F}^{\times}, \C^{\times})$ by $\beta$. In the section \ref{section:SL1D} we proved that $\beta$ restricted to ${\rm SL}_{1}(D_{F})$ is trivial, therefore $\beta \in F^{1}$. In even residue characteristic, since we are dealing with a cohomology class of order 2 (or 1), the following result of C. Riehm \cite{Riehm70} implies that $\beta$ must be trivial in $F^{1}/F^{2}$.
\begin{proposition}[Riehm] \label{prop:Riehm}
Let $G_{0} = {\rm SL}_{1}(D_{F})$ and $G_{i}$ for $i \geq 1$ be the standard congruence subgroup of $G_{0}$. The
\[
[G_{0}, G_{0}] = G_{1}.
\]
In particular, the character group of ${\rm SL}_{1}(D_{F})$ is a finite cyclic group of order prime to $p$.
\end{proposition}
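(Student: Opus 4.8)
The plan is to work entirely inside the filtration of $D_F^\times$ (equivalently of $\mathrm{SL}_1(D_F)$) by standard congruence subgroups, using the structure of the associated graded pieces as modules over the residue field. Write $\mathcal{O}_D$ for the maximal order in $D_F$, let $\mathfrak{P}$ be its maximal two-sided ideal (with uniformizer $\pi_D$, so $\mathfrak{P} = \pi_D \mathcal{O}_D$ and $\pi_D^2$ generates $\mathfrak{p}\mathcal{O}_D$ up to a unit), and set $G_i = 1 + \mathfrak{P}^i$ for $i \geq 1$, with $G_0 = \mathcal{O}_D^\times$ or its norm-one subgroup $\mathrm{SL}_1(D_F)$ as needed. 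The key computational input is that the commutator map induces, for each pair $i,j$, a biadditive pairing on the graded quotients: for $1 + x \in G_i$ and $1 + y \in G_j$ one has
\[
[1+x, 1+y] \equiv 1 + (xy - yx) \pmod{\mathfrak{P}^{i+j+1}},
\]
so the commutator pairing $G_i/G_{i+1} \times G_j/G_{j+1} \to G_{i+j}/G_{i+j+1}$ is, on the level of the graded ring $\mathrm{gr}\,\mathcal{O}_D = \bigoplus \mathfrak{P}^i/\mathfrak{P}^{i+1}$, given by the additive commutator (Lie bracket) $(\bar x, \bar y) \mapsto \overline{xy - yx}$. The first step is therefore to identify this graded ring explicitly: $\mathfrak{P}^i/\mathfrak{P}^{i+1}$ is a one-dimensional vector space over the residue field $k_D = \mathcal{O}_D/\mathfrak{P}$, which is the quadratic extension $\F_{q^2}$ of $k_F = \F_q$, and conjugation by $\pi_D$ acts on $k_D$ through the nontrivial Frobenius automorphism $a \mapsto a^q$.

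The second step is the core of the argument: show that the additive commutator on $\mathrm{gr}\,\mathcal{O}_D$ is surjective onto each piece $\mathfrak{P}^{i}/\mathfrak{P}^{i+1}$ for $i \geq 1$, which forces $[G_0,G_0] \supseteq G_1$ after passing back up the filtration by a successive-approximation (completeness) argument. Concretely, for the pairing $\mathfrak{P}^0/\mathfrak{P}^1 \times \mathfrak{P}^1/\mathfrak{P}^2 \to \mathfrak{P}^1/\mathfrak{P}^2$, an element of $\mathfrak{P}^0/\mathfrak{P}^1 \cong \F_{q^2}$ acts on $\mathfrak{P}^1/\mathfrak{P}^2 \cong \F_{q^2}$ (as a $\pi_D$-twisted module) by $\bar a \cdot \bar u = \overline{a u - u a^q} = (a - a^q)\bar u$, and since $a \mapsto a - a^q$ is a nonzero additive map $\F_{q^2} \to \F_{q^2}$ whose image is the kernel of the trace (a one-dimensional $\F_q$-space), running over all $\bar a$ and using that $\F_{q^2}$ is one-dimensional over $k_D$ one still gets all of $\mathfrak{P}^1/\mathfrak{P}^2$: indeed, fix $\bar a_0$ with $a_0 - a_0^q \neq 0$; then $\bar a_0 \cdot (\F_{q^2}) = \F_{q^2}$ already as a $k_D$-line scaled by a nonzero element. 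The analogous statement for higher $i$ is identical since each $\mathfrak{P}^i/\mathfrak{P}^{i+1}$ is free of rank one over $k_D$. Then one lifts: given $g \in G_1$, find a commutator in $G_0$ agreeing with $g$ modulo $\mathfrak{P}^2$, divide, repeat modulo $\mathfrak{P}^3$, and take the limit using that $D_F^\times$ is complete and commutators are continuous, concluding $g \in [G_0,G_0]$; the reverse inclusion $[G_0,G_0]\subseteq G_1$ is the easy direction, as $G_0/G_1 \cong k_D^\times$ is abelian.

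The third and final step deduces the ``in particular'': since $[\mathrm{SL}_1(D_F), \mathrm{SL}_1(D_F)] = G_1$, the abelianization $\mathrm{SL}_1(D_F)/G_1$ injects into $k_D^\times \cong \F_{q^2}^\times$ — more precisely it is the norm-one subgroup, the kernel of $\F_{q^2}^\times \xrightarrow{\mathrm{Nm}} \F_q^\times$, which is cyclic of order $q+1$, hence finite cyclic of order prime to $p$. Therefore the character group of $\mathrm{SL}_1(D_F)$ (continuous homomorphisms to $\C^\times$, which must be trivial on the pro-$p$ group $G_1$ and on the commutator subgroup) is this finite cyclic group of order $q+1$, prime to $p$. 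The main obstacle I anticipate is the second step — not the surjectivity of the bracket, which is a short linear-algebra computation over $\F_{q^2}$, but verifying carefully that the commutator filtration computation is compatible across all the quotients and that the successive-approximation lift genuinely lands in the commutator subgroup (one must be mildly careful that a product of commutators approximating $g$ converges to an element of the closed — and here, since the relevant quotients are finite, actually honest — commutator subgroup). For $p = 2$ there is the extra subtlety that the naive linearization $[1+x,1+y] \equiv 1 + (xy-yx)$ can have correction terms involving $x^2, y^2$ at the bottom level $i = j = 1$ landing in $\mathfrak{P}^2$, so one should either start the induction at a level where $2i \geq i+1$ strictly, i.e. $i \geq 1$ already suffices since $\mathfrak{P}^2 \subset \mathfrak{P}^{i+1}$ forces nothing problematic when $i \ge 1$... but in fact $x^2 \in \mathfrak{P}^2 = \mathfrak{P}^{i+j}$ exactly when $i=j=1$, so the bracket formula is still valid modulo $\mathfrak{P}^{i+j}$ and the relevant pairing into $\mathfrak{P}^{i+j}/\mathfrak{P}^{i+j+1}$ requires handling those square terms; this is exactly where Riehm's original argument is delicate and where I would spend the most care.
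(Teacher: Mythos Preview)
The paper does not prove this proposition at all: it is quoted as a result of Riehm, with a citation to \cite{Riehm70}, and then used as a black box to control the 2-primary part of $H^2(D_F^\times,\C^\times)$ in the even residue characteristic case. So there is no ``paper's own proof'' to compare against; you are supplying what the paper deliberately omits.

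That said, your sketch is recognizably the strategy of Riehm's original paper: filter by the congruence subgroups $G_i = \mathrm{SL}_1(D_F)\cap(1+\mathfrak{P}^i)$, linearize the commutator on the associated graded, identify the graded pieces as one-dimensional $\F_{q^2}$-spaces with the Frobenius twist coming from conjugation by $\pi_D$, check surjectivity of the Lie bracket level by level, and lift by successive approximation. Two points deserve more care than you give them. First, you slide between $G_0=\mathcal{O}_D^\times$ and $G_0=\mathrm{SL}_1(D_F)$; the proposition is about the latter, so when you compute the pairing $G_0/G_1\times G_1/G_2\to G_1/G_2$ you should be working with the norm-one subgroup of $\F_{q^2}^\times$, i.e.\ elements $a$ with $a^{q+1}=1$, and verify that $a-a^{-1}$ is still generically nonzero there (it is, as long as $q>1$, but say so). Second, your successive-approximation step produces $g\in G_1$ as a convergent \emph{infinite} product of commutators, which a priori lands only in the closure of $[G_0,G_0]$; you flag this but then wave it away. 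Riehm's argument is actually sharper --- he arranges that every element of $G_1$ is a single commutator --- and in any case one needs either that sharper statement or an argument that the abstract commutator subgroup is closed (true here because the relevant quotients are finite and one can work modulo each $G_n$, but this should be made explicit). The $p=2$ wrinkle you mention at the end is real and is precisely where Riehm's paper does extra work.
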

Thus in the even residue characteristic an element of $H^{2}(D_{F}^{\times}, \C^{\times})$ of order 2 which is trivial when restricted to ${\rm SL}_{1}(D_{F})$ arises from the inflation of an element of $H^{2}(F^{\times}, \C^{\times})$.
An element of $H^{2}(F^{\times}, \C^{\times})$ is represented by a central extension 
\[
1 \rightarrow \C^{\times} \rightarrow \widetilde{F^{\times}} \rightarrow F^{\times} \rightarrow 1.
\]
The proof of the splitting of the $\C^{\times}$-metaplectic cover of  ${\rm GL}_{2}(E)$ restricted to $D_{F}^{\times}$ will be completed in even residue characteristic once we prove the following lemma.
\begin{lemma} \label{lemma:F}
A $\C^{\times}$-covering of $D_{F}^{\times}$ coming from a $\C^{\times}$ covering of $F^{\times}$ via the norm map, which is trivial on $L^{\times}$ for all quadratic extensions $L$ of $F$, is trivial.
\end{lemma}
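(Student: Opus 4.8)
The plan is to exploit the hypothesis that the extension of $D_F^\times$ is inflated from a $\C^\times$-central extension of $F^\times$, so the whole question becomes one about central extensions of the abelian group $F^\times$. Write $\widetilde{F^\times}$ for the given central extension of $F^\times$, and let $\beta \in H^2(F^\times, \C^\times)$ be its class; the extension of $D_F^\times$ is $\mathrm{nrd}^*\beta$, where $\mathrm{nrd}\colon D_F^\times \to F^\times$ is the reduced norm. For a quadratic extension $L/F$ the composite $L^\times \hookrightarrow D_F^\times \xrightarrow{\mathrm{nrd}} F^\times$ is the field norm $N_{L/F}$, so the hypothesis says $N_{L/F}^*\beta = 0$ in $H^2(L^\times,\C^\times)$ for every quadratic $L/F$. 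I want to conclude $\beta = 0$, for then $\mathrm{nrd}^*\beta = 0$ and the cover of $D_F^\times$ splits.

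First I would record what central extensions of $F^\times$ look like. Since $F^\times \cong \Z \times \mathcal{O}_F^\times$ and $\mathcal{O}_F^\times$ is a profinite group, and since the extensions in play have order dividing $2$, the relevant group $H^2(F^\times,\C^\times)$ (continuous cohomology) is computed by the Künneth/Hochschild-Serre formula: it has a piece $\mathrm{Hom}(\wedge^2 F^\times, \C^\times)$ measuring the commutator pairing, plus a contribution from $H^2(\mathcal{O}_F^\times,\C^\times)$ and from the non-divisibility of $\Z$ (which vanishes). The key structural point is that a class of order $\le 2$ is detected by (i) its \emph{commutator pairing} $c_\beta\colon F^\times \times F^\times \to \{\pm1\}$, $c_\beta(x,y) = \tilde x\tilde y\tilde x^{-1}\tilde y^{-1}$, which is a skew-symmetric bi-multiplicative form killing squares, hence factors through $F^\times/(F^\times)^2 \times F^\times/(F^\times)^2$; and (ii) once the pairing is trivial the extension is abelian, and an abelian extension of $F^\times$ by $\C^\times$ of exponent $2$ is classified by an element of $\mathrm{Ext}^1(F^\times,\C^\times)[2]$, which here (as $\C^\times$ is divisible, so injective) vanishes — so a trivial commutator pairing already forces $\beta = 0$. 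Thus the entire content is: \emph{if $N_{L/F}^*\beta$ is trivial for all quadratic $L/F$, then the pairing $c_\beta$ is trivial.}

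Next I would translate the pullback condition into a condition on $c_\beta$. Pulling back along $N_{L/F}\colon L^\times \to F^\times$ sends the pairing $c_\beta$ to the pairing $(u,v) \mapsto c_\beta(N_{L/F}u, N_{L/F}v)$ on $L^\times$; triviality of $N_{L/F}^*\beta$ forces this pulled-back pairing to be trivial, i.e.
\[
c_\beta(N_{L/F}u,\, N_{L/F}v) = 1 \qquad \text{for all } u,v \in L^\times.
\]
By local class field theory the image $N_{L/F}(L^\times)$ is an index-$2$ subgroup of $F^\times$, and as $L$ ranges over the (finitely many, since $\mathrm{char}\,F = 0$ and $F$ is local) quadratic extensions of $F$, these norm subgroups range over \emph{all} index-$2$ subgroups of $F^\times$ — equivalently, all hyperplanes in the $\F_2$-vector space $V := F^\times/(F^\times)^2$. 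So the hypothesis says exactly: the alternating form $c_\beta$ on $V$ vanishes identically on every hyperplane of $V$. A short linear-algebra argument over $\F_2$ then finishes it: if $\dim_{\F_2} V \ge 3$, any two vectors lie in a common hyperplane, so $c_\beta \equiv 0$; and the border cases $\dim V \le 2$ have to be dispatched by hand. When $\mathrm{char}\, F = 0$ and the residue characteristic is even (the case at hand), $|F^\times/(F^\times)^2| = 2^{[F:\mathbb{Q}_2]+2} \ge 8$, so $\dim_{\F_2} V \ge 3$ and we are always in the generic case; this is where the standing hypothesis on the residue characteristic is used. Hence $c_\beta$ is trivial, so by the previous paragraph $\beta = 0$, and therefore $\mathrm{nrd}^*\beta = 0$: the $\C^\times$-cover of $D_F^\times$ is trivial.

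The main obstacle is the bookkeeping in the second paragraph: pinning down $H^2(F^\times,\C^\times)$ for the topological (continuous-cochain) cover precisely enough to be sure that a class of order $\le 2$ is \emph{entirely} captured by its commutator pairing, and in particular that no exotic contribution survives from $H^2(\mathcal{O}_F^\times,\C^\times)$ or from the extension structure of the profinite group $\mathcal{O}_F^\times$. One clean way around this, avoiding a direct cohomology computation, is to use the hypothesis with $L$ \emph{ramified}: then $N_{L/F}(L^\times) \supseteq \mathcal{O}_F^\times \cdot (\text{a uniformizer})$ is large, and triviality of $\beta$ restricted to such a subgroup, for enough choices of $L$, already forces $\beta$ to vanish on a set of generators of $F^\times$ on which one controls the extension — but I expect the cleanest writeup to be the linear-algebra route above, with the one honest lemma being the identification of $H^2$ of a local field's multiplicative group in degree $\le 2$ with the commutator pairing, which is standard.
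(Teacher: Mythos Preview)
Your argument is correct and is essentially the paper's own proof, recast in the language of commutator pairings: the paper picks two non-commuting lifts $e_1,e_2\in\widetilde{F^\times}$, uses $\dim_{\F_2}F^\times/(F^\times)^2\ge 3$ (even residue characteristic) to place their images $f_1,f_2$ in an index-$2$ subgroup $F_1=N_{M/F}(M^\times)$, and derives a contradiction from the splitting over $M^\times$ --- exactly your ``any two vectors lie in a common hyperplane'' step. Your final-paragraph worry is unnecessary, since (as you already noted) divisibility of $\C^\times$ gives $\mathrm{Ext}^1(F^\times,\C^\times)=0$, so the commutator pairing alone detects $\beta$; the paper uses this implicitly when it asserts that a non-trivial $\C^\times$-extension of $F^\times$ must contain non-commuting elements.
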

\begin{proof}
Suppose that there exists a {\it non-trivial} $\C^{\times}$-covering of $D_{F}^{\times}$ coming froma $\C^{\times}$-central extension $\widetilde{F^{\times}}$ of $F^{\times}$ via the norm map, which is trivial on $L^{\times}$ for all quadratic extension $L$ of $F$. Thus there are two elements $e_1, e_2 \in \widetilde{F^{\times}}$ which do not commute. Look at the images, say, $f_1, f_2$ of $e_1, e_2$ in $F^{\times}$. Let $\bar{f_1}, \bar{f_2}$ be images of $f_1, f_2$ in $F^{\times}/F^{\times 2}$. Since residue characteristic of $F$ is even, $F^{\times}/F^{\times 2}$ is a vector space over $\Z/2\Z$ of dimension $\geq 3$. Therefore given any two elements $\bar{f_1}, \bar{f_2} \in F^{\times}/F^{\times 2}$, there exist a subgroup $F_{1} \hookrightarrow F^{\times}$ of index 2 containing $f_1, f_2$. By local class field theory, there exists a unique quadratic extension $M$ of $F$ with ${\rm Norm}_{M/F}(M^{\times}) = F_{1}$. Now we use the fact given to us that the central extension of $D_{F}^{\times}$ that we are considering is trivial on $L^{\times}$ for any quadratic extension $L$ of $F$, in particular on $M^{\times}$. Hence the inverse image of $M^{\times}$ in the central extension must be abelian, a contradiction to the construction of $M$.
\end{proof} 
\subsection{Odd residue characteristic case}
In this subsection, we assume that the residue characteristic $p$ of $F$ is odd. We first introduce more notation. Let $\mathcal{O}_{D_{F}}$ be the maximal compact subring of $D_{F}$ and $\mathcal{P}_{D_{F}}$ be the maximal ideal of $\mathcal{O}_{D_{F}}$. Let $D_{F}^{\times}(1) := 1 + \mathcal{P}_{D_{F}}$. Note that $D_{F}^{\times}(1)$ is a normal pro-$p$ subgroup in $D_{F}^{\times}$. Since $p$ is odd and $D_{F}^{\times}(1)$ is a normal pro-$p$ subgroup
\[
H^{2}(D_{F}^{\times}, \Z/2\Z) \cong H^{2}(D_{F}^{\times}/D_{F}^{\times}(1), \Z/2\Z).
\]
In other words, every 2-fold central extension of $D_{F}^{\times}$ arises as a pull back of a 2-fold central extension $D_{F}^{\times}/D_{F}^{\times}(1)$. The structure of the group $D_{F}^{\times}/D_{F}^{\times}(1)$ is $\mathbb{F}_{q^2}^{\times} \rtimes \Z$, where $\mathbb{F}_{q^2}$ is the finite field with $q^2$ elements and $\Z$ operates on $\F_{q^2}^{\times}$ by powers of the Frobenius map $x \mapsto x^q$. This group sits in the following short exact sequence 
\[
0 \rightarrow \mathbb{F}_{q^2}^{\times} \rightarrow G':=D_{F}^{\times}/D_{F}^{\times}(1) \rightarrow \Z \rightarrow 0.
\]
Using this description of the group we prove the following proposition.
\begin{proposition} \label{prop:H}
\begin{enumerate}
\item[(A)] We have
\[
H^{2}(D_{F}^{\times}, \Z/2\Z)  \cong \Z/2\Z \oplus \Z/2\Z.
\]
\item[(B)] If we denote 2-torsion element of $H^{2}(D_{F}^{\times}, \C^{\times})$ by $H^{2}(D_{F}^{\times}, \C^{\times})[2]$ then
\[
H^{2}(D_{F}^{\times}, \C^{\times})[2] = \Z/2\Z.
\]
\end{enumerate}
\end{proposition}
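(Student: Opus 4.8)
The plan is to settle (A) by an explicit spectral sequence computation for the group $G'=\F_{q^2}^{\times}\rtimes\Z$ and then to deduce (B) from (A) by the Kummer sequence $1\to\mu_{2}\to\C^{\times}\to\C^{\times}\to1$ that underlies the passage from $\Z/2\Z$-covers to $\C^{\times}$-covers used throughout the paper.

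For (A): by the isomorphism $H^{2}(D_{F}^{\times},\Z/2\Z)\cong H^{2}(G',\Z/2\Z)$ recorded above (and as $G'$ is a discrete group, there are no topological issues), it suffices to run the Hochschild--Serre spectral sequence of $0\to\F_{q^2}^{\times}\to G'\to\Z\to0$. Since $\Z$ has cohomological dimension $1$, the terms $E_{2}^{p,q}=H^{p}(\Z,H^{q}(\F_{q^2}^{\times},\Z/2\Z))$ vanish for $p\geq2$; the spectral sequence collapses at $E_{2}$ (no $d_{2}$ meets the two surviving spots, and $E_{2}^{2,0}=H^{2}(\Z,\Z/2\Z)=0$), so $H^{2}(G',\Z/2\Z)$ sits in a short exact sequence
\[
0\longrightarrow E_{2}^{1,1}\longrightarrow H^{2}(G',\Z/2\Z)\longrightarrow E_{2}^{0,2}\longrightarrow0,
\]
with $E_{2}^{1,1}=H^{1}(\Z,H^{1}(\F_{q^2}^{\times},\Z/2\Z))$ and $E_{2}^{0,2}=H^{0}(\Z,H^{2}(\F_{q^2}^{\times},\Z/2\Z))$. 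Now $\F_{q^2}^{\times}$ is cyclic of even order $q^{2}-1$ (as $p$ is odd), so periodicity of the cohomology of a finite cyclic group gives $H^{1}(\F_{q^2}^{\times},\Z/2\Z)\cong H^{2}(\F_{q^2}^{\times},\Z/2\Z)\cong\Z/2\Z$. One then checks that the Frobenius $x\mapsto x^{q}$, i.e.\ the generator of the acting $\Z$, operates trivially on both: on $H^{1}=\mathrm{Hom}(\F_{q^2}^{\times},\Z/2\Z)$ pullback along $x\mapsto x^{q}$ is multiplication by $q\equiv1\pmod2$, and on $H^{2}$, which one identifies with $H^{2}(\F_{q^2}^{\times},\Z)/2$ (the odd integral cohomology of a cyclic group vanishing), the Frobenius again acts through $q$, hence trivially modulo $2$. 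Therefore $E_{2}^{1,1}\cong E_{2}^{0,2}\cong\Z/2\Z$, so $H^{2}(G',\Z/2\Z)$ has order $4$; being annihilated by $2$ it is an $\F_{2}$-vector space, which forces $H^{2}(D_{F}^{\times},\Z/2\Z)\cong\Z/2\Z\oplus\Z/2\Z$, proving (A).

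For (B): feed (A) into the long exact cohomology sequence of $D_{F}^{\times}$ attached to $1\to\mu_{2}\to\C^{\times}\xrightarrow{z\mapsto z^{2}}\C^{\times}\to1$. Since the squaring map induces multiplication by $2$ on cohomology, this produces
\[
0\longrightarrow H^{1}(D_{F}^{\times},\C^{\times})/2H^{1}(D_{F}^{\times},\C^{\times})\longrightarrow H^{2}(D_{F}^{\times},\Z/2\Z)\longrightarrow H^{2}(D_{F}^{\times},\C^{\times})[2]\longrightarrow0,
\]
so one never needs $H^{2}(D_{F}^{\times},\C^{\times})$ in full, only $H^{1}(D_{F}^{\times},\C^{\times})/2$. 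Now $H^{1}(D_{F}^{\times},\C^{\times})$ is the group of continuous characters of $D_{F}^{\times}$, and since $[D_{F}^{\times},D_{F}^{\times}]={\rm SL}_{1}(D_{F})$ with $D_{F}^{\times}/{\rm SL}_{1}(D_{F})\cong F^{\times}$ via the reduced norm (the vanishing of $SK_{1}$ for a division algebra over a local field), this is $\mathrm{Hom}(F^{\times},\C^{\times})$. Writing $F^{\times}\cong\Z\times\mu_{q-1}\times U^{(1)}$ with $U^{(1)}=1+\mathfrak{p}_{F}$ a pro-$p$ group ($p$ odd), the $\Z$-factor contributes the divisible group $\C^{\times}$, the factor $U^{(1)}$ contributes a $2$-divisible group (as $p$ is odd), and the cyclic factor $\mu_{q-1}$, of even order, contributes exactly one copy of $\Z/2\Z$; hence $H^{1}(D_{F}^{\times},\C^{\times})/2\cong\Z/2\Z$. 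Combining with (A), the displayed sequence reads $0\to\Z/2\Z\to\Z/2\Z\oplus\Z/2\Z\to H^{2}(D_{F}^{\times},\C^{\times})[2]\to0$, so $H^{2}(D_{F}^{\times},\C^{\times})[2]\cong\Z/2\Z$, proving (B).

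I expect the steps requiring the most care to be: (i) pinning down that the Frobenius acts trivially on $H^{1}$ and $H^{2}$ of $\F_{q^2}^{\times}$ --- elementary, but it is exactly here that the hypothesis "$p$ odd" enters the cohomological count; and (ii) keeping the two coefficient systems apart, since it is legitimate to pass to the finite quotient $G'$ for $\Z/2\Z$-coefficients but not for $\C^{\times}$-coefficients --- the Kummer sequence is precisely what lets one avoid ever computing $H^{2}(D_{F}^{\times},\C^{\times})$ directly. The one external input I would simply cite is the triviality of $SK_{1}(D_{F})$, equivalently that the reduced norm identifies $D_{F}^{\times}/[D_{F}^{\times},D_{F}^{\times}]$ with $F^{\times}$.
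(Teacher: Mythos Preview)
Your proposal is correct and follows essentially the same route as the paper: for (A) the Hochschild--Serre spectral sequence for $G'=\F_{q^2}^{\times}\rtimes\Z$ with the observation that $\Z$ has cohomological dimension $1$, and for (B) the Kummer sequence together with $D_{F}^{\times}/[D_{F}^{\times},D_{F}^{\times}]\cong F^{\times}$ (this is exactly the content of the paper's Lemma~\ref{lemma:I}). Your write-up is in fact slightly more careful than the paper's in two places---you explicitly note that $H^{2}(G',\Z/2\Z)$ is an $\F_{2}$-vector space to resolve the extension, and you spell out the decomposition of $F^{\times}$---though your check that Frobenius acts trivially on $H^{i}(\F_{q^2}^{\times},\Z/2\Z)\cong\Z/2\Z$ is unnecessary, since $\mathrm{Aut}(\Z/2\Z)$ is trivial.
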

\begin{proof}
\begin{enumerate} 
\item[(A)]  Since $G' = \F_{q^2}^{\times} \rtimes \Z$ and $\Z$ has cohomological dimension 1, the Hochschild-Serre spectral sequence $E_{2}^{i,j} = H^{i}(\Z, H^{j}(\F_{q^2}^{\times}, \Z/2\Z))$ calculating the cohomology of $G'$  has $E_{2}^{1,1}=E_{\infty}^{1,1}$, $E_{2}^{0,2} = E_{\infty}^{0,2}$ and $E_{2}^{2,0} = E_{\infty}^{2,0} =0$. Therefore
\[
0 \longrightarrow H^{1}(\Z, H^{1}(\F_{q^2}^{\times}, \Z/2\Z)) \longrightarrow H^{2}(G', \Z/2\Z) \longrightarrow H^{2}(\F_{q^2}^{\times}, \Z/2\Z)^{\Z} \longrightarrow 0.
\]
Since $H^{1}(\F_{q^2}^{\times}, \Z/2\Z) = \Z/2\Z$ and $H^{2}(\F_{q^2}^{\times}, \Z/2\Z) = \Z/2\Z$ and since $\Z$ must act trivially on $\Z/2\Z$, we get
\[
0 \rightarrow H^{1}(\Z, \Z/2\Z) \rightarrow H^{2}(G', \Z/2\Z) \rightarrow \Z/2\Z \rightarrow 0.
\]
which proves part (A) of the proposition.
\item[(B)] This is evident from the short exact sequence of next Lemma \ref{lemma:I}.
\end{enumerate}
This proves the proposition.
\end{proof}
By proposition \ref{prop:H} there are four non-isomorphic two-fold coverings of the group $D_{F}^{\times}$. The lemma below proves that one of these non-trivial 2-fold covers becomes trivial as a $\C^{\times}$-cover. 
\begin{lemma} \label{lemma:I}
We have the following short exact sequence 
\[
0 \longrightarrow \dfrac{H^{1}(D_{F}^{\times}, \C^{\times})}{2H^{1}(D_{F}^{\times}, \C^{\times})} \longrightarrow H^{2}(D_{F}^{\times}, \Z/2\Z) \longrightarrow H^{2}(D_{F}^{\times}, \C^{\times})[2] \longrightarrow 0
\]
with
\[
\dfrac{H^{1}(D_{F}^{\times}, \C^{\times})}{2H^{1}(D_{F}^{\times}, \C^{\times})} \cong \Z/2\Z
\]
where for any abelian group $A$, $A[2]=\{ a \in A : 2a=0 \}$. 
\end{lemma}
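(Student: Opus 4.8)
The plan is to obtain the exact sequence from a Bockstein long exact sequence, and then to identify the left-hand term by computing the character group of $D_{F}^{\times}$ modulo squares.

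First I would use the short exact sequence of trivial $D_{F}^{\times}$-modules
\[
1 \longrightarrow \Z/2\Z \longrightarrow \C^{\times} \xrightarrow{\ z \mapsto z^{2}\ } \C^{\times} \longrightarrow 1 ,
\]
in which $\Z/2\Z$ is identified with $\{\pm 1\}\subset\C^{\times}$ and surjectivity of squaring uses that $\C$ is algebraically closed. The associated long exact sequence in (continuous) cohomology of $D_{F}^{\times}$ contains the segment
\[
H^{1}(D_{F}^{\times},\C^{\times}) \xrightarrow{\ \cdot 2\ } H^{1}(D_{F}^{\times},\C^{\times}) \xrightarrow{\ \delta\ } H^{2}(D_{F}^{\times},\Z/2\Z) \longrightarrow H^{2}(D_{F}^{\times},\C^{\times}) \xrightarrow{\ \cdot 2\ } H^{2}(D_{F}^{\times},\C^{\times}) ,
\]
and taking the cokernel of the first map together with the kernel of the last map produces exactly the asserted short exact sequence, the middle arrow being the Bockstein $\delta$ and the right-hand term being $H^{2}(D_{F}^{\times},\C^{\times})[2]$. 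As a consistency check, combined with $H^{2}(D_{F}^{\times},\Z/2\Z)\cong(\Z/2\Z)^{2}$ from Proposition~\ref{prop:H}(A), the claim $H^{1}/2H^{1}\cong\Z/2\Z$ forces $H^{2}(D_{F}^{\times},\C^{\times})[2]\cong\Z/2\Z$, which is part~(B) of that proposition.

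It then remains to prove $H^{1}(D_{F}^{\times},\C^{\times})/2H^{1}(D_{F}^{\times},\C^{\times})\cong\Z/2\Z$. Since $\C^{\times}$ is a trivial module, $H^{1}(D_{F}^{\times},\C^{\times})=\mathrm{Hom}(D_{F}^{\times},\C^{\times})$ is the group of continuous characters, so it depends only on the abelianization of $D_{F}^{\times}$. Because $p$ is odd, $D_{F}^{\times}(1)=1+\mathcal{P}_{D_{F}}$ is a normal pro-$p$ subgroup, so every continuous character carries it into $\mu_{p^{\infty}}$; the inflation--restriction sequence then shows that the cokernel of the inflation $\mathrm{Hom}(G',\C^{\times})\hookrightarrow\mathrm{Hom}(D_{F}^{\times},\C^{\times})$ is a group all of whose elements have $p$-power order, whence (as $p$ is odd) $H^{1}(D_{F}^{\times},\C^{\times})/2H^{1}(D_{F}^{\times},\C^{\times})\cong\mathrm{Hom}(G',\C^{\times})/2\,\mathrm{Hom}(G',\C^{\times})$, where $G'=D_{F}^{\times}/D_{F}^{\times}(1)\cong\F_{q^{2}}^{\times}\rtimes\Z$ as recalled above. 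Finally I would compute $(G')^{\mathrm{ab}}$: the commutator subgroup of $\F_{q^{2}}^{\times}\rtimes\Z$ is generated by the elements $x^{q-1}$ for $x\in\F_{q^{2}}^{\times}$, i.e.\ it is the subgroup of $(q-1)$-th powers in the cyclic group $\F_{q^{2}}^{\times}$ of order $q^{2}-1$, which has order $q+1$; hence $(G')^{\mathrm{ab}}\cong\Z/(q-1)\Z\times\Z$ and $\mathrm{Hom}(G',\C^{\times})\cong\mu_{q-1}\times\C^{\times}$. Since $q$ is odd, $q-1$ is even, and since $\C^{\times}$ is $2$-divisible, the quotient by squares is $\Z/2\Z$.

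The routine ingredient is the Bockstein sequence; the only step requiring genuine care is the computation of the character group, where one must work consistently with the topological cohomology and make sure the pro-$p$ part $D_{F}^{\times}(1)$ contributes nothing modulo $2$. An alternative that bypasses the congruence subgroup altogether is to invoke Wang's theorem $SK_{1}(D_{F})=0$, which gives $(D_{F}^{\times})^{\mathrm{ab}}\cong F^{\times}$ via the reduced norm and reduces the claim to the elementary identity $\mathrm{Hom}(F^{\times},\C^{\times})/2\,\mathrm{Hom}(F^{\times},\C^{\times})\cong\Z/2\Z$, using $F^{\times}\cong\Z\times\F_{q}^{\times}\times(1+\mathfrak{p}_{F})$ with $q$ odd.
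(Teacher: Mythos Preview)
Your derivation of the short exact sequence via the Bockstein long exact sequence associated to $1\to\Z/2\Z\to\C^{\times}\xrightarrow{(\cdot)^{2}}\C^{\times}\to 1$ is exactly what the paper does, so that part coincides.

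For the identification $H^{1}(D_{F}^{\times},\C^{\times})/2H^{1}(D_{F}^{\times},\C^{\times})\cong\Z/2\Z$, your primary argument differs from the paper's. The paper simply invokes $[D_{F}^{\times},D_{F}^{\times}]=\SL_{1}(D_{F})$ and $D_{F}^{\times}/\SL_{1}(D_{F})\cong F^{\times}$ (via the reduced norm) to identify the character group of $D_{F}^{\times}$ with that of $F^{\times}$, and then reads off the answer from $F^{\times}/F^{\times 2}$ in odd residue characteristic --- this is precisely the ``alternative'' you mention at the end using $SK_{1}(D_{F})=0$. Your main route instead passes to $G'=D_{F}^{\times}/D_{F}^{\times}(1)\cong\F_{q^{2}}^{\times}\rtimes\Z$, argues that the pro-$p$ piece $D_{F}^{\times}(1)$ contributes nothing modulo~$2$, and computes $(G')^{\mathrm{ab}}\cong\Z/(q-1)\Z\times\Z$ by hand. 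This is correct (the commutator subgroup of the semidirect product is indeed the group of $(q-1)$-th powers in $\F_{q^{2}}^{\times}$, of order $q+1$), and has the virtue of staying within the elementary description of $G'$ already set up in the paper; the paper's route is shorter but imports the nontrivial input $[D_{F}^{\times},D_{F}^{\times}]=\SL_{1}(D_{F})$.
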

\begin{proof}
The short exact sequence can be deduced from the long exact sequence of cohomology groups of $D_{F}^{\times}$ arising from the following short exact sequence 
\begin{displaymath}
\xymatrix{ 0  \ar[r] & \Z/2\Z \ar[r] & \C^{\times} \ar[r]^{2}  & \C^{\times} \ar[r] & 0 }.
\end{displaymath}
Since $[D_{F}^{\times}, D_{F}^{\times}]= \SL_{1}(D_{F})$ and $D_{F}^{\times}/\SL_{1}(D_{F}) \cong F^{\times}$, the second statement follows from the fact that the character group of $D_{F}^{\times}$, i.e. $H^{1}(D_{F}^{\times}, \C^{\times})$, is the same as the character group of $F^{\times}$, and in the odd residue characteristic, it is easy to see that 
\[
\dfrac{H^{1}(F^{\times}, \C^{\times})}{2H^{1}(F^{\times}, \C^{\times})} \cong  \Z/2\Z. \qedhere
\] 
\end{proof}
\begin{proposition} \label{prop:J}
Let $M$ be the quadratic unramified extension of $F$ with $M  \hookrightarrow D_{F}$. Then a 2-fold cover of $D_{F}^{\times}$ which remains non-trivial with  $\C^{\times}$ coefficients does not split over the subgroup $M^{\times} \hookrightarrow D_{F}^{\times}$.
\end{proposition}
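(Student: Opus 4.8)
The plan is to exhibit the non-trivial $\C^{\times}$-class in $H^{2}(D_{F}^{\times},\C^{\times})[2]$ explicitly, restrict it to $M^{\times}$, and show the restriction is non-trivial. By Proposition~\ref{prop:H}(B) and Lemma~\ref{lemma:I}, $H^{2}(D_{F}^{\times},\C^{\times})[2]\cong\Z/2\Z$, so there is essentially only one candidate to analyse. Since every $2$-fold cover of $D_{F}^{\times}$ is inflated from $G'=D_{F}^{\times}/D_{F}^{\times}(1)\cong\F_{q^{2}}^{\times}\rtimes\Z$, and since $M$ is the unramified quadratic extension, the subgroup $M^{\times}\hookrightarrow D_{F}^{\times}$ maps onto a subgroup of $G'$ of the form $\F_{q^{2}}^{\times}\rtimes 2\Z$ (the valuation on $M^{\times}$ is the restriction of the valuation on $D_{F}^{\times}$, so a uniformizer of $M$ maps to the element of $\Z$-degree $2$; meanwhile $\mathcal{O}_{M}^{\times}$ surjects onto $\F_{q^{2}}^{\times}$). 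So the first step is to identify, inside the computation of Proposition~\ref{prop:H}(A), which of the two $\Z/2\Z$ summands of $H^{2}(D_{F}^{\times},\Z/2\Z)$ survives to $H^{2}(D_{F}^{\times},\C^{\times})[2]$: by Lemma~\ref{lemma:I} the kernel $H^{1}(D_{F}^{\times},\C^{\times})/2H^{1}(D_{F}^{\times},\C^{\times})$ is the image of $H^{1}(\Z,\Z/2\Z)=H^{1}(\Z,H^{1}(\F_{q^{2}}^{\times},\Z/2\Z))$ under the transgression-free edge map, hence the surviving class is the pullback of the generator of $H^{2}(\F_{q^{2}}^{\times},\Z/2\Z)^{\Z}$, i.e. it comes from the (unique) non-trivial central extension of the cyclic group $\F_{q^{2}}^{\times}$ by $\Z/2\Z$.

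The second step is then a finite-group computation. Let $n=q^{2}-1=\#\F_{q^{2}}^{\times}$. The relevant $2$-fold cover of $\F_{q^{2}}^{\times}\cong\Z/n\Z$ is the cyclic group $\Z/2n\Z$ (the non-split extension, which exists because $n$ is even, $q$ being odd). Concretely one takes the cocycle $c(x,y)=1$ if $x+y<n$ and $c(x,y)=-1$ if $x+y\ge n$, on representatives $0\le x,y<n$. The claim that the surviving $\C^{\times}$-class is non-trivial on $M^{\times}$ amounts to: the pullback of this extension of $\F_{q^{2}}^{\times}$ along the inclusion $\mathcal{O}_{M}^{\times}\to D_{F}^{\times}$ combined with the $\Z$-shift by $2$ from a uniformizer gives a non-split central extension of $M^{\times}$ by $\C^{\times}$. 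Since $\mathcal{O}_{M}^{\times}$ surjects onto all of $\F_{q^{2}}^{\times}$ (the residue field of $M$ is $\F_{q^{2}}$), the restriction of the cover to $\mathcal{O}_{M}^{\times}$ already contains a non-central copy of the non-split $\Z/2n\Z$, so the cover is non-abelian when restricted to $M^{\times}$; in particular it does not split. I would phrase this cleanly by noting that a splitting over $M^{\times}$ would in particular split over $\mathcal{O}_{M}^{\times}$, but $\mathcal{O}_{M}^{\times}$ maps onto $\F_{q^{2}}^{\times}$ and the class restricted there is exactly the generator of $H^{2}(\F_{q^{2}}^{\times},\C^{\times})[2]=\Z/2\Z$, which is non-trivial.

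The main obstacle is bookkeeping with the spectral-sequence edge maps: one must be sure that the non-trivial $\C^{\times}$-class really is the one pulled back from $\F_{q^{2}}^{\times}$ and not killed, and that the restriction map $H^{2}(G',\C^{\times})[2]\to H^{2}(\F_{q^{2}}^{\times}\rtimes 2\Z,\C^{\times})[2]$ is injective on that class — equivalently, that passing from $\Z$ to $2\Z$ (index $2$) does not make the surviving class become a coboundary. This is where one uses that $2\Z$ still acts trivially on $H^{2}(\F_{q^{2}}^{\times},\C^{\times})$ and that the $E_{2}^{0,2}$-term for $\F_{q^{2}}^{\times}\rtimes 2\Z$ is still all of $H^{2}(\F_{q^{2}}^{\times},\C^{\times})^{2\Z}=H^{2}(\F_{q^{2}}^{\times},\C^{\times})$, so the restriction is compatible with the identification of the surviving summand with $H^{2}(\F_{q^{2}}^{\times},\C^{\times})[2]$; hence injectivity. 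Everything else is the explicit cyclic-group cocycle computation, which I would not grind through in detail — it suffices to recall that the extension of $\Z/n\Z$ by $\Z/2\Z$ classified by the generator of $H^{2}(\Z/n\Z,\Z/2\Z)$ is the cyclic group $\Z/2n\Z$, which is visibly non-split, and this property is inherited by the finite subquotients of $M^{\times}$.
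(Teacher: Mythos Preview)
Your identification of which $\Z/2\Z$-class survives to $\C^{\times}$ is correct: with $\Z/2\Z$ coefficients the Bockstein image sits in the $E_{\infty}^{1,1}$ piece, so the $\C^{\times}$-nontrivial class is one whose restriction to $H^{2}(\F_{q^{2}}^{\times},\Z/2\Z)$ is the generator. But the argument then collapses at the crucial step. You write that this class, restricted to $\mathcal{O}_{M}^{\times}$ (equivalently to $\F_{q^{2}}^{\times}$), is ``the generator of $H^{2}(\F_{q^{2}}^{\times},\C^{\times})[2]=\Z/2\Z$''. This group is in fact zero: $\F_{q^{2}}^{\times}$ is finite cyclic and $\C^{\times}$ is divisible, so $H^{2}(\F_{q^{2}}^{\times},\C^{\times})=\C^{\times}/(\C^{\times})^{q^{2}-1}=0$. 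Concretely, the $2$-fold cover $\Z/2n\Z\to\Z/n\Z$ that you write down \emph{does} split once pushed out to $\C^{\times}$, since $\Z/2n\Z\times_{\Z/2\Z}\C^{\times}\cong\Z/n\Z\times\C^{\times}$. Hence restricting to $\mathcal{O}_{M}^{\times}$ cannot detect the $\C^{\times}$-class, and your ``non-abelian'' claim has no force. The same error recurs in the final paragraph, where you invoke the $E_{2}^{0,2}$ term with $\C^{\times}$ coefficients; that term vanishes identically.

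The paper's proof uses precisely this vanishing to pin down the class. Since $H^{2}(\F_{q^{2}}^{\times},\C^{\times})=0$, the spectral sequence with $\C^{\times}$ coefficients gives $H^{2}(G',\C^{\times})=E_{2}^{1,1}=H^{1}(\Z,\widehat{\F_{q^{2}}^{\times}})$ and likewise $H^{2}(M^{\times}/M^{\times}(1),\C^{\times})=H^{1}(2\Z,\widehat{\F_{q^{2}}^{\times}})$. The restriction map $h$ is then the restriction $H^{1}(\Z,\widehat{\F_{q^{2}}^{\times}})\to H^{1}(2\Z,\widehat{\F_{q^{2}}^{\times}})$, and its injectivity follows from the inflation--restriction sequence for $2\Z\lhd\Z$ together with $H^{1}(\Z/2\Z,\widehat{\F_{q^{2}}^{\times}})=0$, which is Hilbert~90 after identifying $\widehat{\F_{q^{2}}^{\times}}$ with $\F_{q^{2}}^{\times}$ as Galois modules (up to an inversion that is harmless for $H^{1}$). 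So the non-triviality over $M^{\times}$ is genuinely a statement about the interaction of $\F_{q^{2}}^{\times}$ with the uniformizer direction, not something visible on $\mathcal{O}_{M}^{\times}$ alone.
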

\begin{proof}
Let  $M^{\times}(1)=1+\mathcal{P}_{M}$. As $M$ is a quadratic unramified extension of $F$, we have
\[
M^{\times}/M^{\times}(1) \cong \F_{q^2}^{\times} \times \Z.
\]
Since $\Z$ has cohomological dimension 1, by Kunneth theorem
\[
\begin{array}{lll}
 H^{2}(M^{\times}/M^{\times}(1), \Z/2\Z) &\cong& H^{2}(\F_{q^2}^{\times}, \Z/2\Z) \oplus \left( H^{1}(\F_{q^2}^{\times}, \Z/2\Z) \otimes H^{1}(\Z, \Z/2\Z) \right) \\
 &=& \Z/2\Z \oplus \Z/2\Z.
\end{array}
\]
Since $M^{\times}/M^{\times}(1) \cong \F_{q^2}^{\times} \times \Z$, its character group is isomorphic to $\widehat{\F_{q^2}^{\times}} \times \C^{\times}$. So once again as in Lemma \ref{lemma:I}, we get the following short exact sequence:
\[
0 \rightarrow \Z/2\Z= \dfrac{H^{1}(M^{\times}/M^{\times}(1), \C^{\times})}{2 H^{1}(M^{\times}/M^{\times}(1), \C^{\times})} \rightarrow H^{2}(M^{\times}, \Z/2\Z) \rightarrow H^{2}(M^{\times}, \C^{\times}) \rightarrow 0
\]
By considering the embedding $M^{\times}/M^{\times}(1) \hookrightarrow D_{F}^{\times}/D_{F}^{\times}(1)=G'$, we get the following exact sequences with connecting homomorphisms
\begin{displaymath}
 \xymatrix{ 0 \ar[r] & \dfrac{H^{1}(G', \C^{\times})}{2 H^{1}(G', \C^{\times})} \ar[r] \ar[d]^{f} & H^{2}(G', \Z/2\Z) \ar[r] \ar[d]^{g}  & H^{2}(G', \C^{\times})[2] \ar[r] \ar[d]^{h} & 0 \\
                  0 \ar[r] & \dfrac{H^{1} \left( \frac{M^{\times}}{M^{\times}(1)}, \C^{\times} \right) }{2 H^{1} \left( \frac{M^{\times}}{M^{\times}(1)}, \C^{\times}\right)} \ar[r] & H^{2} \left( \frac{M^{\times}}{M^{\times}(1)}, \Z/2\Z \right) \ar[r] & H^{2} \left( \frac{M^{\times}}{M^{\times}(1)}, \C^{\times} \right) [2] \ar[r] &  0 } \label{Diagram:**} \tag{**}
\end{displaymath}
In the next lemma we prove that $h$ is injective. This proves the proposition.
\end{proof}
\begin{lemma} \label{lemma:L}
The right most vertical map $h : H^{2}(G', \C^{\times})[2] \rightarrow H^{2}(M^{\times}, \C^{\times})[2]$ in the above diagram \ref{Diagram:**} is an isomorphism.
\end{lemma}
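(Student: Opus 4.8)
The plan is to pin down everything in (\ref{Diagram:**}) — the groups are all small — and then force $h$ to be nonzero by a diagram chase. By Proposition~\ref{prop:H}(B), Lemma~\ref{lemma:I}, and the parallel computation recorded in the proof of Proposition~\ref{prop:J}, the four outer groups $H^{2}(G',\C^{\times})[2]$, $H^{2}(M^{\times}/M^{\times}(1),\C^{\times})[2]$ and the two subgroups $H^{1}(-,\C^{\times})/2H^{1}(-,\C^{\times})$ are each $\Z/2\Z$, while the two middle groups $H^{2}(-,\Z/2\Z)$ are $(\Z/2\Z)^{2}$; so the assertion is simply that $h\colon \Z/2\Z\to\Z/2\Z$ is nonzero. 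The first thing I would do is dispose of the leftmost vertical map $f$. The group $H^{1}(G',\C^{\times})/2H^{1}(G',\C^{\times})$ is generated by (the class of) the unique quadratic character of $G' = \mathbb{F}_{q^{2}}^{\times}\rtimes\Z$, which is inflated from $\mathbb{F}_{q^{2}}^{\times}$ and restricts there to the quadratic character of $\mathbb{F}_{q^{2}}^{\times}$; since the embedding $M^{\times}/M^{\times}(1)=\mathbb{F}_{q^{2}}^{\times}\times\Z\hookrightarrow \mathbb{F}_{q^{2}}^{\times}\rtimes\Z=G'$ is the identity on the $\mathbb{F}_{q^{2}}^{\times}$-factors, this character restricts to the quadratic character of $\mathbb{F}_{q^{2}}^{\times}\subset M^{\times}/M^{\times}(1)$, which generates the lower-left group. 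Hence $f$ is an isomorphism.

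With $f$ an isomorphism, the snake lemma applied to (\ref{Diagram:**}) gives $\ker h\cong\ker g$ and $\operatorname{coker}h\cong\operatorname{coker}g$, so it suffices to prove that the middle map $g\colon H^{2}(G',\Z/2\Z)\to H^{2}(M^{\times}/M^{\times}(1),\Z/2\Z)$ is an isomorphism of $(\Z/2\Z)^{2}$'s. Using the Hochschild--Serre filtration with respect to $\mathbb{F}_{q^{2}}^{\times}$ on both sides (equivalently the Künneth decomposition downstairs), write $H^{2}(M^{\times}/M^{\times}(1),\Z/2\Z)=H^{2}(\mathbb{F}_{q^{2}}^{\times},\Z/2\Z)\oplus\bigl(H^{1}(\mathbb{F}_{q^{2}}^{\times},\Z/2\Z)\otimes H^{1}(\Z,\Z/2\Z)\bigr)$, and let $F^{1}H^{2}(G',\Z/2\Z)=\ker(\operatorname{res}_{\mathbb{F}_{q^{2}}^{\times}})$, a $\Z/2\Z$. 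The first summand is immediate: restriction to $\mathbb{F}_{q^{2}}^{\times}$ is the identity, so the Bockstein of the quadratic character of $G'$ maps under $g$ to a class with nonzero first component. The remaining point is that the generator of $F^{1}H^{2}(G',\Z/2\Z)$ — the $2$-fold cover of $G'$ that is trivial on $\mathbb{F}_{q^{2}}^{\times}$, which one can represent as the cup product of the ``valuation mod $2$'' character of $G'$ with an extension to $G'$ of the quadratic character of $\mathbb{F}_{q^{2}}^{\times}$ — should restrict to the generator of the cross term $H^{1}(\mathbb{F}_{q^{2}}^{\times},\Z/2\Z)\otimes H^{1}(\Z,\Z/2\Z)$ downstairs. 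If that holds, then $g$ carries a basis of $H^{2}(G',\Z/2\Z)$ to a basis of the target, so $g$, and hence $h$, is an isomorphism; this also proves Proposition~\ref{prop:J}.

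The step I expect to be the real obstacle is precisely this last one: it is a statement about how the $\Z$-factor of $M^{\times}/M^{\times}(1)$ sits inside the $\Z$-factor of $G' = D_{F}^{\times}/D_{F}^{\times}(1)$, i.e.\ about the $D_{F}$-valuation of a uniformizer of $M$, and everything turns on keeping the parity straight. I would carry it out with explicit cocycles: realize the $F^{1}$-class by the $2$-cocycle $(g_{1},g_{2})\mapsto e(g_{1})\psi'(g_{2})$ on $G'$, where $e$ is the valuation-mod-$2$ character and $\psi'$ extends the quadratic character of $\mathbb{F}_{q^{2}}^{\times}$, pull it back along $M^{\times}/M^{\times}(1)\hookrightarrow G'$, and check that the resulting cocycle is not a coboundary — equivalently, that $e$ restricts to the nontrivial character of the $\Z$-factor of $M^{\times}/M^{\times}(1)$. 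The bookkeeping here (compatible uniformizers of $F$, $M$ and $D_{F}$, and the behaviour of $H^{1}(\Z,\,\cdot\,)$ under the induced map on the $\Z$-quotients) is the delicate part, and is where I would spend most of the effort.
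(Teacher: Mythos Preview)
Your argument breaks at the very first step: the map $f$ is \emph{not} an isomorphism, it is the zero map. You claim that the quadratic character of $\F_{q^{2}}^{\times}$ generates $H^{1}(M^{\times}/M^{\times}(1),\C^{\times})/2H^{1}(M^{\times}/M^{\times}(1),\C^{\times})$, but this group is characters modulo \emph{squares of characters}, not modulo the trivial character. Since $q$ is odd, $4\mid q^{2}-1$, so the order-$2$ element of the cyclic group $\widehat{\F_{q^{2}}^{\times}}\cong\Z/(q^{2}-1)$ is a square; hence the quadratic character is trivial in $H^{1}/2H^{1}$. More directly, any character of $G'$ restricted to $\F_{q^{2}}^{\times}$ has the form $\chi_{0}\circ\mathrm{Nm}_{\F_{q^{2}}/\F_{q}}$, and $\chi_{0}\circ\mathrm{Nm}=(\tilde{\chi_{0}})^{\,q+1}$ for any extension $\tilde{\chi_{0}}$ of $\chi_{0}$ to $\F_{q^{2}}^{\times}$; as $q+1$ is even this is always a square, so $f=0$.

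Your second step fails for the same parity reason you yourself flag as ``the real obstacle''. Since $M/F$ is unramified, a uniformizer $\pi_{M}$ has $D_{F}$-valuation $2$, so the $\Z$-factor of $M^{\times}/M^{\times}(1)$ lands in $2\Z\subset\Z$ inside $G'$, and the character $e$ (valuation mod $2$) restricts \emph{trivially} to $M^{\times}$. Thus $e\cup\psi'$ restricts to $0$, and $g$ genuinely has kernel $\Z/2\Z$; it is not an isomorphism. With $f=0$ and $\ker g=\Z/2\Z$ the snake lemma gives no information about $\ker h$ without further input identifying which class generates $\ker g$, so the diagram chase cannot be completed along these lines. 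The paper avoids all of this by working directly with $\C^{\times}$-coefficients: the Hochschild--Serre spectral sequence collapses (as $H^{2}$ of a cyclic group with $\C^{\times}$-coefficients vanishes) to give $H^{2}(G',\C^{\times})=H^{1}(\Z,\widehat{\F_{q^{2}}^{\times}})$ and $H^{2}(M^{\times}/M^{\times}(1),\C^{\times})=H^{1}(2\Z,\widehat{\F_{q^{2}}^{\times}})$, and then the inflation--restriction sequence for $2\Z\subset\Z$ together with Hilbert~90 (applied to $\widehat{\F_{q^{2}}^{\times}}\cong\F_{q^{2}}^{\times}$ as Galois modules) shows the restriction map is injective.
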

 \begin{proof}
Consider the short exact sequence which appeared in the proof of Proposition \ref{prop:H} with $\Z/2\Z$ replaced by $\C^{\times}$,
 \[
0 \longrightarrow H^{1}(\Z, H^{1}(\F_{q^2}^{\times}, \C^{\times})) \longrightarrow H^{2}(G', \C^{\times}) \longrightarrow H^{2}(\F_{q^2}^{\times}, \C^{\times})^{\Z} \longrightarrow 0.
\]
This combined with the fact that second cohomology of a cyclic group with $\C^{\times}$-coefficients is zero, implies that
\[
H^{2}(G', \C^{\times}) = H^{1}(\Z, H^{1} (\F_{q^2}^{\times}, \C^{\times})) = H^{1}(\Z, \widehat{\F_{q^2}^{\times}}).
\]
Similarly
\[
H^{2}(M^{\times}, \C^{\times}) = H^{1}(2\Z, H^{1} (\F_{q^2}^{\times}, \C^{\times})) = H^{1}(2\Z, \widehat{\F_{q^2}^{\times}}).
\] 
We need to prove that the restriction map 
\[
H^{1}(\Z, \widehat{\F_{q^2}^{\times}})[2] \cong \Z/2\Z \longrightarrow H^{1}(2\Z, \widehat{\F_{q^2}^{\times}})[2] \cong \Z/2\Z
\]
is injective. For this,
consider the following short exact sequence
 \begin{displaymath}
\xymatrix{ 0 \ar[r] & \Z \ar[r]^{2}  &\Z \ar[r] & \Z/2\Z \ar[r] & 0 } 
 \end{displaymath}
The above exact sequence gives rise to the following inflation-restriction exact sequence
\[
0 \longrightarrow H^{1}(\Z/2\Z, \widehat{\F_{q^2}^{\times}}) \longrightarrow H^{1}(\Z, \widehat{\F_{q^2}^{\times}}) \longrightarrow H^{1}(2\Z, \widehat{\F_{q^2}^{\times}}).
\] 
By the next lemma there is an isomorphism of $\widehat{\F_{q^2}^{\times}}$ with $\F_{q^2}^{\times}$ preserving the natural $Gal(\F_{q^2}/\F_{q})$ action on these groups.
Hence by Hilbert's theorem 90 we get that $H^{1}(\Z/2\Z, \widehat{\F_{q^2}^{\times}} )= 0$. So the map 
\[
H^{1}(\Z, \widehat{\F_{q^2}^{\times}}) \rightarrow H^{1}(2\Z, \widehat{\F_{q^2}^{\times}})
\]
is injective and hence in particular on 2-torsions 
\[
H^{1}(\Z, \widehat{\F_{q^2}^{\times}})[2] \rightarrow H^{1}(2\Z, \widehat{\F_{q^2}^{\times}})[2].
\]
This proves that the map $h$ is non-zero and an isomorphism.
\end{proof}
\begin{lemma} \label{lemma:M}
There is an isomorphism of $\widehat{\F_{q^d}^{\times}}$ with $\F_{q^d}^{\times}$ such that the natural Galois action of $Gal(\F_{q^d}/\F_{q})$ on $\widehat{\F_{q^d}^{\times}}$ becomes the inverse of the natural action of $Gal(\F_{q^d}/\F_{q})$ on $\F_{q^d}^{\times}$ (where by ``inverse" of an action of an abelian group $G$ on a module $M$, we mean $g * m = (g^{-1})m$).
\end{lemma}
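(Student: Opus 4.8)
The plan is to reduce the statement to an elementary computation with scalar multiplications on a finite cyclic group. Write $n = q^{d}-1$, so that $C := \F_{q^{d}}^{\times}$ is cyclic of order $n$, and fix a generator $\zeta$ of $C$. Every homomorphism $C \to \C^{\times}$ takes values in the group $\mu_{n} \subset \C^{\times}$ of $n$-th roots of unity, so fixing in addition a primitive $n$-th root of unity $\omega \in \C^{\times}$ produces an isomorphism
\[
\widehat{C} = \mathrm{Hom}(C,\C^{\times}) \;\xrightarrow{\;\sim\;}\; \Z/n\Z, \qquad \chi \longmapsto a \ \ (\text{where } \chi(\zeta) = \omega^{a}),
\]
and the generator $\zeta$ likewise identifies $C$ with $\Z/n\Z$. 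Thus both sides of the desired isomorphism become identified with $\Z/n\Z$, and all that remains is to transport the two actions of $\Gamma := Gal(\F_{q^{d}}/\F_{q}) = \langle\phi\rangle$, $\phi(x)=x^{q}$, to $\Z/n\Z$ and compare them.

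First I would observe that $\phi$ acts on $C$ by $\zeta \mapsto \zeta^{q}$, i.e.\ by multiplication by $q$ on $\Z/n\Z$; hence on $C$ equipped with the \emph{inverse} action $\phi \ast x = \phi^{-1}(x)$ it acts by multiplication by $q^{-1}$, the inverse of $q$ in $(\Z/n\Z)^{\times}$ (which exists since $\gcd(q,n)=1$). Next I would track the natural contragredient action $(\phi\cdot\chi)(x) = \chi(\phi^{-1}x)$ through the identification of $\widehat{C}$ with $\Z/n\Z$: since $\phi^{-1}(\zeta)=\zeta^{q^{-1}}$, one has $(\phi\cdot\chi)(\zeta)=\chi(\zeta)^{q^{-1}}=\omega^{aq^{-1}}$, so $\phi$ acts on $\widehat{C}\cong\Z/n\Z$ again by multiplication by $q^{-1}$. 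In other words $\widehat{C}$ carrying its natural $\Gamma$-action and $C$ carrying the inverse $\Gamma$-action have been identified with one and the same object: $\Z/n\Z$ on which $\phi$ is multiplication by $q^{-1}$.

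Finally, under these identifications every group isomorphism $\widehat{C}\to C$ is multiplication by some unit of $\Z/n\Z$, and since multiplications on $\Z/n\Z$ commute with one another, any such isomorphism automatically intertwines the $\phi$-action on the source with the $\phi$-action on the target. That is to say, it is $\Gamma$-equivariant for the natural action on $\widehat{\F_{q^{d}}^{\times}}$ and the inverse action on $\F_{q^{d}}^{\times}$, which is precisely the assertion of the lemma. The one point where care is genuinely required --- the closest thing here to an obstacle --- is the normalization underlying the previous paragraph: one must take the natural action on the dual to be the contragredient $(\phi\cdot\chi)(x)=\chi(\phi^{-1}x)$, so that it appears as multiplication by $q^{-1}$ and matches the \emph{inverse} action rather than the action itself. (For $d=2$, as in Lemma \ref{lemma:L}, one has $\phi^{-1}=\phi$, so the inverse action coincides with the action itself and this subtlety disappears.)
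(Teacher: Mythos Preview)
Your argument is correct and is essentially a fleshed-out version of the paper's own proof, which consists of a single sentence: since $Gal(\F_{q^d}/\F_q)$ acts on $\F_{q^d}^{\times}$ by $x\mapsto x^{q}$, the claim is clear. You have simply made explicit what ``clear'' means---identifying both sides with $\Z/n\Z$, observing that $\phi$ acts by multiplication by $q$ (respectively $q^{-1}$) under the direct and contragredient conventions, and noting that any abelian-group isomorphism then automatically intertwines the actions---and you have also flagged the convention issue about the dual action, which for the case $d=2$ actually used in Lemma~\ref{lemma:L} is moot since $\phi=\phi^{-1}$.
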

\begin{proof}
Since the $Gal(\F_{q^d}/\F_{q})$ operates by $x \mapsto x^{q}$ on $\F_{q^d}^{\times}$, the proof of the lemma is clear.
\end{proof}

\section*{Acknowledgements}
The author would like to thank Professor Dipendra Prasad for his help in writing the paper. He also thanks Professor Sandeep Varma for many useful conversations. The author is supported by Tata Institute of Fundamental Research, Mumbai by the Ph. D. scholarship.


\end{document}